\documentclass[twoside,11pt]{article}
\usepackage{amssymb}
\usepackage{amsfonts}
\usepackage{amscd}
\usepackage{amsmath}
\usepackage{amsthm}

\widowpenalty=1000
\clubpenalty=1000
\raggedbottom\oddsidemargin .4in\evensidemargin .4in
\marginparwidth =1in
\textwidth 14.6cm
\topmargin 15mm
\textheight 620pt
\advance\topmargin by -\headheight
\advance\topmargin by -\headsep

\newcommand{\comment}[1]{}

\newcommand \al{\alpha}

\newcommand\ga{\gamma}
\newcommand\de{\delta}
\newcommand\ep{\varepsilon}
\newcommand\ze{\zeta}
\newcommand\et{\eta}
\renewcommand\th{\theta}

\newcommand\ka{\kappa}
\newcommand\la{\lambda}
\newcommand\rh{\rho}

\newcommand\De{\Delta}
\newcommand\Th{\Theta}

\newcommand\Om{\Omega}
\newcommand\Ph{\Phi}

\newcommand\ie{i.e.\ }

\newcommand\X{\mathfrak X}

\newcommand\on{\operatorname}
\newcommand\g{\mathfrak g}
\newcommand\h{\mathfrak h}

\renewcommand\k{\mathfrak k}

\newcommand\so{\mathfrak {so}}

\newcommand{\RR}{{\mathbb R}}

\newcommand{\x}{{\times}}
\newcommand{\pa}{{\partial}}

\newcommand{\oo}{\infty}
\renewcommand{\o}{\circ}
\newcommand\Ad{\on{Ad}}
\newcommand\ad{\on{ad}}

\newcommand\Diff{\on{Diff}}

\newcommand\grad{\on{grad}}

\renewcommand\div{\on{div}}

\newcommand\Den{\on{Den}}

\newcommand\C{{\mathcal C}}

\newcommand\F{{\mathcal F}}
\newcommand{\pg}{\Phi_{\ga}}
\newcommand{\pig}{\Phi^{-1}_{\ga}}
\newcommand{\dx}{\partial_x}
\newcommand{\dxg}{(\partial_x)_{\ga}}
\newcommand{\gd}{\dot{\ga}}
\newcommand{\gi}{\ga^{-1}}



\numberwithin{equation}{section}

\begin{document}

\newtheorem{theorem}{Theorem}[section]
\newtheorem{definition}[theorem]{Definition}
\newtheorem{lemma}[theorem]{Lemma}
\newtheorem{proposition}[theorem]{Proposition}
\newtheorem{corollary}[theorem]{Corollary}
\theoremstyle{remark}
\newtheorem{remark}[theorem]{Remark}
\theoremstyle{definition}
\newtheorem{example}[theorem]{Example}

\title{Generalized Euler-Poincar\'e equations on Lie groups and homogeneous spaces, orbit invariants and applications}

\author{Feride T\i\u{g}lay and Cornelia Vizman }
\maketitle

\begin{abstract}

We develop the necessary tools, including a notion of 
logarithmic derivative for curves in homogeneous spaces,
for deriving a general class of equations including Euler-Poincar\'e equations on Lie groups and homogeneous spaces. Orbit invariants 
play an important role in this context and we use these 
invariants to prove global existence and uniqueness 
results for a class of PDE. This class includes Euler-Poincar\'e equations that have not yet been considered in the literature as well as integrable 
equations like Camassa-Holm, Degasperis-Procesi,  $\mu$CH 
and $\mu$DP equations, and the geodesic equations 
with respect to right invariant Sobolev metrics on the group of 
diffeomorphisms of the circle. 

\end{abstract}




\section{Introduction}

The Euler-Poincar\'e equations on a Lie group $G$ are the  Euler-Lagrange equations for $G$--invariant Lagrangians $L:TG\to\RR$ \cite{MR}, \cite{MS}. They are written in the right invariant case
for the reduced Lagrangians $l$ on the Lie algebra $\g$ of $G$
as 
\begin{equation}\label{minus}
\frac{d}{dt}\frac{\de l}{\de u}=-\ad^*_u\frac{\de l}{\de u},
\end{equation}
while the left invariant case has no $-$ sign.
The Euler-Poincar\'e equations with quadratic Lagrangians are the geodesic equations on Lie groups 
with right invariant Riemannian metrics, called Euler equations, 
whose study was initiated by Arnold in the context of an incompressible ideal fluid \cite{Ar}.

We consider more general equations of the form 
\begin{equation}\label{teta}
\frac{d}{dt}\frac{\de l}{\de u}=-\th^*_u\frac{\de l}{\de u},
\end{equation}
where $\th^*$ is the infinitesimal action of $\g$ on $\g^*$ associated to a right group action $\Th^*$ of $G$ 
on $\g^*$, thus replacing the coadjoint actions. We call them generalized Euler-Poincar\'e equations.
This generalization is motivated by \cite{LMT}, 
where the action of the diffeomorphism group of the circle on $\la$--densities is considered,
and the Degasperis-Procesi equation and the $\mu$DP equation are obtained for $\la=3$. 
We consider the action of the diffeomorphism group of a manifold on 1-form $\la$--densities, 
thus obtaining a generalized EPDiff equation, which is the usual EPDiff equation for $\la=1$ 
and an extension of the Degasperis-Procesi equation to higher dimensions for $\la=2$.

The solutions of Euler-Poincar\'e equations preserve the coadjoint orbits.
This leads to an abstract Noether theorem that provides conserved quantities along the flow of the Euler-Poincar\'e equation \cite{HMR}.
We generalize this formalism and prove an abstract Noether theorem for the generalized Euler-Poincar\'e equations \eqref{teta}.

Geodesic equations with right $G$--invariant metrics on homogeneous spaces of $G$ are studied in \cite{KM} and \cite{LMP}, the main examples being the Hunter-Saxton equation and the multidimensional Hunter-Saxton equation, which are geodesic equations for right invariant $\dot H^1$ metrics on the homogeneous spaces 
$S^1\setminus\Diff(S^1)$ and $\Diff_\mu(M)\setminus\Diff(M)$ respectively.
These are Euler-Lagrange equations with quadratic Lagrangian: the energy.
We introduce Euler-Poincar\'e equations on the homogeneous space 
$H\setminus G$
as Euler-Lagrange equations with general right $G$--invariant Lagrangian functions.
For this purpose we introduce a right logarithmic derivative suited to homogeneous spaces of right cosets
$$
\bar\de^r:C^\oo(I,H\setminus G)\to C^\oo(I,\g)/C^\oo(I,H),\quad\bar\de^r\bar\ga=C^\oo(I,H)\cdot\de^r\ga,
$$ 
the logarithmic derivative of a curve $\bar\ga$ in $H\setminus G$
being the orbit of the logarithmic derivative of $\ga$, an arbitrary lift of $\bar\ga$.
The left action of $C^\oo(I,H)$, the path group of $H$, on $C^\oo(I,\g)$, the path space of the Lie algebra $\g$, 
is given here by $h\cdot u=\Ad(h)u+\de^r h$.
Consequently the Euler-Poincar\'e equations on homogeneous spaces
of right cosets are of the form \eqref{minus},
but the reduced Lagrangians $l$ on $\g$ have to be at the same time
$\h$--invariant under addition and $H$--invariant under the adjoint action.

The replacement of the coadjoint action $\ad^*$ by a Lie algebra action $\th^*$ of $\g$ on $\g^*$ produces generalized Euler-Poincar\'e equations on the homogeneous space $H\setminus G$ if the group $C^\oo(I,H)$ is a symmetry group of the equation \eqref{teta}.
This is accomplished if $\th^*$ is $H$--equivariant and if the restriction of $\th^*$ to the Lie algebra $\h$ of $H$ is the same as the restriction of the coadjoint action $\ad^*$ to $\h$.
The typical examples are the Lie algebra of vector fields on the circle acting on $\la$--densities and the Lie algebra of vector fields on a manifold with a volume form acting on 1-form $(\la-1)$-densities.
These lead us to the $\mu$Burgers equation \cite{LMT} and to
a multidimensional $\mu$Burgers equation.

We consider a class of nonlinear partial differential equations
\begin{equation}\label{eq:mom}
\partial_t \Phi u+u\partial_x \Phi u+\lambda (\partial_x u)\Phi u=0 
\end{equation}
where $\Phi=\sum_{j=0}^r (-1)^r \partial_x^{2r}$. These equations are generalized right Euler-Poincar\'{e} equations \eqref{teta} on the diffeomorphism group of the circle for the reduced Lagrangian $l(u)=\frac12\int_{S^1} u\Phi u dx$. The group coadjoint action $\Ad^*$ and the associated Lie algebra action $\ad^*$ are replaced by the group action $\Theta^*$ on $\lambda$--densities and the associated infinitesimal action $\theta^*$ respectively, so $\th^*_uf=u\pa_xf+\la (\pa_xu) f$. For $\lambda=2$ this is the coadjoint action, and \eqref{eq:mom} is the equation for geodesics on $\Diff(S^1)$ with respect to the right invariant metric defined by the $H^r$ inner product.  

Integrable equations such as Camassa-Holm and  Degasperis-Procesi  are notable members of this class of PDE. For $\Phi=\mu -\partial_x^2$ other integrable systems, namely $\mu$CH and $\mu$DP equations, are in the form \eqref{eq:mom}.
One also obtains all geodesic equations
with respect to right invariant Sobolev metrics on the group of 
diffeomorphisms of the circle.
Using orbit invariants we prove global (in time) existence and uniqueness of classical solutions of the periodic Cauchy problem for equation \eqref{eq:mom}.

\medskip

The paper is organized as follows.
In section 2 we present the general framework for Euler-Poincar\'e equations \eqref{minus}. Orbit invariants lead to an abstract Noether theorem of Holm, Marsden and Ratiu \cite{HMR}. Kelvin circulation theorems in ideal hydrodynamics and a Kelvin circulation theorem for the  EPDiff equation naturally follow from this abstract theorem. In order to set up the scene for the applications we also present explicit formulas for the orbit invariants for Camassa-Holm and $\mu$CH equations. 

In section 3 we develop a parallel framework for the generalized Euler-Poincar\'e equations \eqref{teta}. The orbit invariants for the underlying group action lead to a generalization of the abstract Noether theorem of \cite{HMR}. This generalization is motivated by the generalized EPDiff equations, which can be seen as higher dimensional versions of the family of equations derived by Degasperis and Procesi in \cite{DP}. A Kelvin circulation result for the generalized EPDiff equations follows.
Furthermore we give explicit formulas for orbit invariants for Degasperis-Procesi and $\mu$DP equations to be used in section 6.

In section 4 we introduce Euler-Poincar\'e equations on a homogeneous space as Euler-Lagrange equations for $G$--invariant Lagrangian functions $L$ on its tangent bundle.
First we introduce the logarithmic derivative for curves on
homogeneous spaces,
then we determine the invariance properties of the reduced Lagrangian functions $l$ on $\g$ and $\bar l$ on $\g/\h$.
For quadratic $l$ we recover the geodesic equations on homogeneous spaces from \cite{KM}.
Examples are the Hunter-Saxton equation, the multidimensional HS equation, as well as the Landau-Lifschitz equation.
We prove an abstract Noether theorem for Euler-Poincar\'e equations on homogeneous spaces.

Furthermore the coadjoint action $\ad^*$ can be replaced by another Lie algebra action $\th^*$ of $\g$ on $\g^*$ to produce generalized Euler-Poincar\'e equations on homogeneous spaces if $\th^*$ satisfies two extra conditions depending on the subgroup $H$ of $G$. 
This is shown in section 5, where a multidimensional
$\mu$Burgers equation is derived in this way.
An abstract Noether theorem for generalized Euler-Poincar\'e equations on homogeneous spaces is applied to the (multidimensional) 
$\mu$Burgers equation.

In section 6 we use orbit invariants presented in the previous sections to prove global (in time) existence and uniqueness of solutions to the periodic Cauchy problem for \eqref{eq:mom}. First we prove local well-posedness by restating the problem as an initial value problem for an ODE  on the appropriate infinite dimensional space and implementing a Banach contraction argument. Then we show that a sign condition on the initial data implies the estimates that guarantee the persistence of solutions for all time.


\section{Euler-Poincar\'e equations and orbit invariants}\label{s2}

In this section we present known results about orbit invariants and 
abstract Noether theorems for Euler-Poincar\'e equations,
including geodesic equations on Lie groups.
We add a few examples, most of them on the group of diffeomorphisms of the circle. For preliminaries on infinite dimensional Lie groups and their Lie algebras we refer to \cite{KM}, \cite{KW} or \cite{Neeb}.

Let $G$ be a Lie group and $\g$ its Lie algebra. 
The Euler-Lagrange equation for a right 
invariant La\-gran\-gian $L:TG\to\RR$, with value $l:\g\to\RR$ at the identity \cite{MR} \cite{MS}
\begin{equation}\label{EP}
\frac{d}{dt}\frac{\de l}{\de u}=-\ad^*_u\frac{\de l}{\de u}
\end{equation}
is called the {\it right Euler-Poincar\'e equation}.
Here $u=\ga'\ga^{-1}=\de^r\ga$ is a curve in $\g$, the right logarithmic derivative of the curve $\ga$ in $G$, so $\frac{\de l}{\de u}$ is a curve in $\g^*$. The Lie algebra coadjoint operator $\ad^*_\xi$ is given by the formula
$$
(\ad^*_\xi\al,\et)=(\al,\ad_\xi\et)\text{ for }\al\in\g^*
\text{ and }\xi,\et\in\g.
$$
With the classical notation $m=\frac{\de l}{\de u}$ for the momentum, 
the right Euler-Poincar\'e equation becomes 
\[
\tfrac{d}{dt}m=-\ad^*_um.
\]
For other approaches to the Euler-Poincar\'e equation we refer to \cite{H}.

The group coadjoint operator $\Ad^*_g$ is given by the formula
$$
(\Ad^*_g\al,\xi)=(\al,\Ad_{g}\xi),\quad\forall\xi\in\g.
$$
Note that $\Ad$ is a left action on $\g$, but $\Ad^*$ is a right action on $\g^*$.
It is straightforward to check the identity
\[
\frac{d}{dt}(\Ad^*_\ga m)=\Ad^*_\ga\Big(\frac{d}{dt}m+\ad^*_{\ga'\ga^{-1}}m\Big)
\]
for all curves $\ga$ in $G$ and all curves $m$ in $\g^*$.
This shows that a conserved quantity along solutions $u$ of the right Euler-Poincar\'e equation \eqref{EP} is
\begin{equation}\label{vval}
\Ad^*_\ga m= \Ad^*_\ga\frac{\de l}{\de u}= \text{const.}
\end{equation}
for any curve $\ga$ in $G$ satisfying $u=\ga'\ga^{-1}$. 
Similarly, $\Ad^*_{\ga^{-1}}m$ is a conserved quantity along solutions of the {\it left Euler-Poincar\'e equation} 
\begin{equation}\label{EPleft}
\frac{d}{dt}\frac{\de l}{\de u}=\ad^*_u\frac{\de l}{\de u},
\end{equation}
where $u=\ga^{-1}\ga'$ is the left logarithmic derivative of $\ga$.
In both situations the evolution of momentum $m=\frac{\de l}{\de u}$ 
takes place on a coadjoint orbit. Hence \eqref{vval} can be seen as  a formulation of the invariance of coadjoint orbits
under the right Euler-Poincar\'e flow. 

An abstract Noether theorem that formalizes the connection between invariance of coadjoint orbits and conserved quantities along the flow of the Euler-Poincar\'e equation is proved by Holm, Marsden and Ratiu in \cite{HMR}, 
and is designed to provide circulation type theorems. 
Let $\C$ be a $G$--manifold and $\ka:\C\to\g^{**}$ a $G$--equivariant map.
The Kelvin quantity $I:\C\x\g\to\RR$ is defined by
\begin{equation}\label{icu}
I( c ,u)=\Big(\ka( c ),\frac{\de l}{\de u}\Big).
\end{equation}
For $c$ driven by the right Euler-Poincar\'e flow, we can write the Kelvin quantity $I$ in terms of the coadjoint group action as
\begin{equation*}
I:=I(\ga. c _0,u)=\Big(\ga\cdot \ka(c_0),\frac{\de l}{\de u}\Big)=\Big(\ka(c_0), \Ad^*_{\ga}\frac{\de l}{\de u}\Big).
\end{equation*}
Then by \eqref{vval} the Kelvin quantity $I$ is conserved. This result is proved as an abstract Noether theorem for Euler-Poincar\'e equations in \cite{HMR}.

\begin{remark}\label{gval}
One can take $\C=\g$ and as $G$--equivariant map $\ka:\g\to\g^{**}$ simply the inclusion. With these assumptions the Kelvin quantity becomes $I(\xi,u)=(\frac{\de l}{\de u},\xi)$.
Then the abstract Noether theorem assures that along solutions of
the Euler-Poincar\'e equation \eqref{EP} the Kelvin quantity $I(t)=(\frac{\de l}{\de u},\Ad_{\ga(t)}\xi_0)$ is constant for any $\xi_0\in\g$.
\end{remark}

\begin{example}
In order to recover Kelvin circulation theorem in ideal hydrodynamics one takes $G=\Diff_\mu(M)$ the group of volume preserving diffeomorphisms on a compact Riemannian manifold $(M,g)$ with $\g=\X_\mu(M)$ the Lie algebra of divergence free vector fields and its regular dual $\g^*=\Om^1(M)/dC^\oo(M)$.
Considering $\C=\mathcal{L}$ the space of loops in $M$, the map 
\begin{equation*}
\ka:\mathcal{L}\to\g^{**},\quad (\ka( c ),[\al])=\int_ c \al,\quad [\al]\in\g^*,
\end{equation*}
is well defined and 
$G$--equivariant. 
The $L^2$ inner product on $\g$ defines a quadratic Lagrangian function $l(u)=\frac12\int_M g(u,u)\mu$.
The corresponding right Euler-Poincar\'e equation,
which is the geodesic equation for the right invariant $L^2$ metric,
is Euler equation for ideal fluid flow \cite{Ar} \cite{EM}
\begin{equation}\label{fluid}
\partial_tu+\nabla_uu=-\grad p,\quad\div u=0.
\end{equation}
Because $\frac{\de l}{\de u}=[u^\flat]$, the
Kelvin quantity \eqref{icu} associated to this map $\ka$ is $I=\int_c u^\flat$. The abstract Noether theorem implies that,
when the loop $ c$ is driven by the ideal fluid flow,
$I$ is conserved along solutions of \eqref{fluid}.
\end{example}

An abstract version of Kelvin-Noether theorem for Euler-Poincar\'e equations with advected parameters
is proved by Holm, Marsden and Ratiu in \cite{HMR}. This theorem is subsequently applied  to finite dimensional mechanical systems, as well as to continua: heavy top, compressible magneto-hydrodynamics and Maxwell fluid.  Furthermore applications to liquid crystals are introduced  by Holm in \cite{Holm}.

An abstract Noether theorem for semidirect products with applications to metamorphosis is presented by Holm and Tronci in \cite{HT}. The more general case of affine Lagrangian semidirect product theory is developed by Gay-Balmaz and Ratiu in \cite{GBR}, and the Kelvin-Noether theorem is adapted to this framework. The applications include a variety of equations gathered under the name complex fluids: spin systems, Yang-Mills and Hall magnetohydrodynamics, superfluids and microfluids.  

\paragraph{Geodesic equations.}
The previous example of ideal incompressible fluid was the one which motivated Arnold to 
develop an elegant geometric framework for studying differential equations \cite{Ar} \cite{AK}.
A special case of Euler-Poincar\'e equations of particular interest occurs for quadratic Lagrangians:
geodesic equations on Lie groups with right invariant Riemannian metric. 

In this case a symmetric operator $A:\g\to\g^*$, called the {\it inertia operator}, 
is employed to write the Lagrangian as $l(\xi)=\frac12(A\xi,\xi)$. 
This means that $l(\xi)=\frac12\langle \xi,\xi\rangle_\g$,
where the inner product on $\g$ is defined by $\langle\xi,\et\rangle_\g=(A\xi,\et)$. 
The inertia operator is injective if and only if the inner product on $\g$ is non-degenerate,
 in which case the Euler-Poincar\'e equation is the geodesic equation 
for the corresponding right invariant metric on the Lie group $G$.
The derivative
$\frac{\de l}{\de \xi}=A\xi=\langle \xi,\cdot\rangle_\g$ can be identified with $\xi$ because $A$ is injective, 
and the Euler-Poincar\'e equation \eqref{EP} is no other than the Euler equation
\begin{equation}\label{euler}
\tfrac{d}{dt}u=-\ad(u)^\top u.
\end{equation}
under this identification. Here $\ad(\xi)^\top$ is the adjoint of $\ad_\xi$ with respect to the inner product on $\g$, i.e.
$A(\ad(\xi)^\top\et)=\ad^*_\xi(A\et)$.

Denoting by $\Ad(g)^\top$ for $g\in G$, the adjoint of $\Ad_g$ with respect to the inner product,
we have an integrated version of the above identity:
$A(\Ad(g)^\top\et)=\Ad_g^*(A\et)$,
which can be used to show that the quantity $\Ad(\ga)^\top u$ is conserved 
along Euler equation \eqref{euler} (see also corollary 3.4 in \cite{MP}).

In the special case of geodesic equations on Lie groups with right invariant metrics \cite{Vizman}, Kelvin-Noether theorems were formulated for central Lie group extensions, semidirect product Lie groups, abelian Lie group extensions and a class of central extensions of semidirect products \cite{V7}.

\paragraph{The group of diffeomorphisms of the circle.}
The Lie algebra of the group of diffeomorphisms of the circle is $\X(S^1)$, the Lie algebra of vector fields on $S^1$ with the opposite Lie bracket.
We identify vector fields on the circle with functions
so that the Lie algebra adjoint action is given by $\ad_\xi\et=-[\xi,\et]=\xi'\et-\xi\et'$.
Under the identification of the regular dual of the Lie algebra $\X(S^1)$ 
with the space of functions via the $L^2$ inner product, the coadjoint action of the Lie algebra is
$\ad^*_\xi m=\xi m'+2\xi'm$.
Then the operator $\Ad_\ga$ that specifies the group adjoint action is 
\begin{equation}\label{adjo}
\Ad_\ga\xi=\partial_{\ep}|_{\ep=0} \left(\ga\circ\xi_{\ep}\circ\ga^{-1}\right)= \left(\xi\circ\ga^{-1}\right)\left(\ga'\circ\ga^{-1}\right),
\end{equation}
and the restriction of its adjoint to the regular dual is
$\Ad^*_\ga m=(m\o\ga)(\ga')^2$, the coadjoint action of $\Diff(S^1)$.
Note that in our convention the adjoint action is a left action of the diffeomorphism group,
while the coadjoint action is a right action of the diffeomorphism group.


\begin{example}\label{ex:B}
Burgers' equation
\begin{equation}\label{burgers2}
\partial_tu=-3uu'
\end{equation}
is the geodesic equation on the group of diffeomorphisms on the circle for the right invariant metric defined by the $L^2$ inner product $\langle u_1,u_2\rangle _{L^2}=\int_{S^1}u_1u_2dx$.
In this case $m=\frac{\de l}{\de u}$ is identified with $u$ and 
the invariant \eqref{vval} assures that, for $u$ solution of (\ref{burgers2}) and $\ga$ a corresponding geodesic, $\Ad^*_\ga u=(u\o\ga)(\ga')^2$ 
is conserved.
\end{example}

\begin{example}\label{ex:ch}
The Camassa-Holm equation \cite{CH}, \cite{FF}
\begin{equation}
\partial_t u-\partial_t u''+3uu'-uu'''-2u'u''=0
\end{equation}
is the geodesic equation on $\Diff(S^1)$
for the right invariant $H^1$ metric defined by the inner product
$\langle u_1,u_2\rangle _{H^1}=\int_{S^1}(u_1u_2+u_1'u_2')dx=\langle u_1,(1-\partial_x^2)u_2\rangle _{L^2}$. 
In this case $m=\frac{\de l}{\de u}=u-u''$ and the Camassa-Holm equation can be written in the form
\begin{equation}\label{ch2}
\partial_t m=-um'-2u'm.
\end{equation}
The orbit invariant \eqref{vval} assures that
\begin{equation*} 
\Ad^*_\ga m=(m\o\ga)(\ga')^2
\end{equation*}
is conserved, for a solution $u$ of (\ref{ch2}) and  a corresponding geodesic $\ga$.
\end{example}


\begin{example}\label{ex:much}
The $\mu$CH equation (it is introduced as $\mu$HS in \cite{KLM})
\begin{equation}\label{ghs}
\partial_tu''=2\mu(u)u'-2u'u''-uu''',
\end{equation}
where $\mu(u)=\int_{S^1}udx$ is the mean of the function $u$ on $S^1$, is the geodesic equation on the group of diffeomorphisms on the circle for the right invariant metric defined by the scalar product
$$\langle u_1,u_2\rangle _\mu=\int_{S^1}(\mu(u_1)\mu(u_2)+u_1'u_2')dx=\langle u_1,(\mu-\partial_x^2)u_2\rangle _{L^2}.$$ This equation admits a Lax pair and has a bihamiltonian structure \cite{KLM}.
By setting $m=\mu(u)-u''$ we can write it in the familiar form
\eqref{ch2}.
Then by \eqref{vval} we get like for the Camassa-Holm equation the conservation of 
\begin{equation*}
\Ad^*_\ga m=(m\o\ga)(\ga')^2,\quad m=\mu(u)-u''.
\end{equation*}
\end{example}

 
\paragraph{The EPDiff equation.}
A quadratic Lagrangian $l$ on the Lie algebra of vector fields $\X(M)$ on a compact Riemannian manifold $(M,g)$ can be expressed through a positive-definite symmetric operator $\Ph$ on $\X(M)$ by
\begin{equation}\label{quadratic}
l(u)=\frac12\int_M g(u, \Ph(u))\mu,
\end{equation}
where $\mu$ is the canonical volume form on $M$. 
The regular dual of $\X(M)$ is the space $\Om^1(M)\otimes\Den(M)$ of 1-form densities. 

The momentum density of the fluid is 
$m=\frac{\de l}{\de u}={\Ph(u)}^\flat \otimes\mu$, 
where the operator $\flat$ is associated to the Riemannian metric $g$. 
Since the coadjoint action is the Lie derivative, 
the right Euler-Poincar\'e equation on $\Diff(M)$ is $\partial_t m+L_um=0$. 
It is called the EPDiff equation, and
written for $\mathbf{m}=\Ph(u)$ in $\X( M)$ it takes the well known form \cite{HM}
\begin{equation}\label{eq:EPDiff}
\partial_t \mathbf{m}+u\cdot\nabla \mathbf{m}+(\nabla u)^\top\cdot \mathbf{m}+(\div u)\mathbf{m}=0.
\end{equation}
The EPDiff equation is the geodesic equation on $\Diff(M)$
with right invariant metric given by the symmetric operator $\Ph$.
In the special case $\mathbf{m}=\Ph(u)=u-\De u$, one obtains a higher dimensional Camassa-Holm equation: the geodesic equation 
on $\Diff(M)$ for the right invariant $H^1$ metric.
 
A known circulation result says that for a loop $ c $ in $ M$ and a density $\rho$ on $ M$, both driven by the EPDiff flow,
the quantity $\int_ c \frac{1}{\rho}\frac{\de l}{\de u}$ is conserved,
where $c=\ga\o c_0$ and $\rh=(\ga^{-1})^*\rh_0$ for $\ga'\ga^{-1}=u$ and fixed $c_0$ and $\rh_0$ \cite{HMR}.
The result fits in the setting of the abstract Noether theorem when choosing $\C=\mathcal{L}(M)\x\Den(M)$ the product of the space of loops and the space of densities on $M$, acted on by $\Diff(M)$. 
Then the equivariant map
\begin{equation}\label{equiv}
\ka:\mathcal{L}(M)\x\Den(M)\to\X( M)^{**},\quad (\ka(c,\rh),m)=\int_ c \frac{1}{\rh}m,\quad m\in\X(M)^*, 
\end{equation}
provides the Kelvin quantity $I=\int_c\frac{1}{\rho}\frac{\de l}{\de u}$. 
This means that  for a loop $c$ and a density $\rh=f\mu$, $f\in C^\oo(M)$, both driven by the EPDiff flow,
the quantity $I=\int_c\frac{1}{f}\mathbf{m}^\flat$ is constant along \eqref{eq:EPDiff}.


\section{Generalized Euler-Poincar\'e equations
and an abstract Noether theorem}\label{s3}

We consider a generalized Euler-Poincar\'e equation associated to a right invariant Lagrangian function $L:TG\to\RR$ with value $l:\g\to\RR$ at the identity and to a $G$--action $\Th$ on $\g$, namely 
\begin{equation}\label{gen}
\frac{d}{dt}\frac{\de l}{\de u}=-\th^*_u\frac{\de l}{\de u},
\end{equation}
where $u$ is the right logarithmic derivative of a curve $\ga$ in $G$, $\th$ is the infinitesimal action associated 
to the (left) group action $\Th$, and $\th_\xi^*$ is the adjoint of $\th_\xi$ for $\xi\in\g$. 
For $m=\frac{\de l}{\de u}$ the equation \eqref{gen} becomes $\frac{d}{dt}m=-\th_u^*m$. 

We notice that what is actually needed for the generalized Euler-Poincar\'e equations is just 
the (right) group action $\Th^*$ on $\g^*$ and its Lie algebra action $\th^*$.

\begin{proposition}\label{genact}
A conserved quantity along solutions $u$ of the generalized Euler-Poincar\'e equation \eqref{gen} is
$$
\Th^*_\ga(m)=\Th^*_\ga\Big(\frac{\de l}{\de u}\Big)=\text{const.}
$$ 
for any curve $\ga$ in $G$ satisfying $u=\ga'\ga^{-1}$.
\end{proposition}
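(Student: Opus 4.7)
The plan is to establish the infinitesimal transport identity
\[
\frac{d}{dt}\bigl(\Theta^*_\gamma m\bigr) = \Theta^*_\gamma\Bigl(\frac{d}{dt}m + \theta^*_{\gamma'\gamma^{-1}} m\Bigr),
\]
valid for arbitrary smooth curves $\gamma$ in $G$ and $m$ in $\g^*$; this is the direct analog of the identity written just before \eqref{vval} for the coadjoint case. Once it is in hand, setting $u = \gamma'\gamma^{-1}$ and invoking \eqref{gen} makes the bracketed expression on the right vanish, so $\Theta^*_\gamma m$ is conserved.

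To prove the identity I would apply the product rule,
\[
\frac{d}{dt}\Theta^*_{\gamma(t)}\bigl(m(t)\bigr) = \frac{d}{ds}\bigg|_{s=0}\Theta^*_{\gamma(t+s)}\bigl(m(t)\bigr) + \Theta^*_{\gamma(t)}\bigl(m'(t)\bigr),
\]
where linearity of $\Theta^*_{\gamma(t)}$ on $\g^*$ is used in the second summand. For the first summand I would exploit that $\Theta^*$ is a \emph{right} action of $G$ on $\g^*$, whence $\Theta^*_{gh} = \Theta^*_h \circ \Theta^*_g$. Decomposing $\gamma(t+s) = h(s)\gamma(t)$ for a curve $h$ with $h(0) = e$ and $h'(0) = \gamma'(t)\gamma(t)^{-1} = u(t)$, we obtain
\[
\Theta^*_{\gamma(t+s)}\bigl(m(t)\bigr) = \Theta^*_{\gamma(t)}\bigl(\Theta^*_{h(s)}m(t)\bigr),
\]
and differentiating at $s = 0$ yields $\Theta^*_{\gamma(t)}\bigl(\theta^*_{u(t)}m(t)\bigr)$ by the very definition of $\theta^*$ as the infinitesimal generator of $\Theta^*$. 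Assembling the two pieces produces the displayed identity.

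The only real subtlety is handedness bookkeeping: since $\Theta^*$ is a right action, one must decompose $\gamma(t+s) = h(s)\gamma(t)$ rather than the other way around, so that the factor $h(s)$ ends up on the outside of the composition after applying $\Theta^*$ and hence delivers $\theta^*_u$ with $u$ matching the right logarithmic derivative convention used in \eqref{gen}. Specializing to $\Theta = \Ad$ recovers the classical identity and the conservation statement \eqref{vval}, which is a useful sanity check on the conventions; otherwise the argument is a direct calculation with no further obstacle.
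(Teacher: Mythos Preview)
Your proposal is correct and follows essentially the same approach as the paper: establish the transport identity $\frac{d}{dt}(\Theta^*_\gamma m)=\Theta^*_\gamma(\frac{d}{dt}m+\theta^*_{\gamma'\gamma^{-1}}m)$, the direct generalization of the coadjoint identity displayed just before \eqref{vval}, and then observe that the bracket vanishes along solutions of \eqref{gen}. Your handedness bookkeeping (decomposing $\gamma(t+s)=h(s)\gamma(t)$ so that the right-action rule $\Theta^*_{gh}=\Theta^*_h\circ\Theta^*_g$ places $\Theta^*_{h(s)}$ on the inside) is correct and is exactly the point one needs to get right.
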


An abstract Noether theorem holds for generalized Euler-Poincar\'e equations too.

\begin{theorem}\label{noet2}
Given a $G$--manifold $\C$ and a $G$--equivariant map $\ka:\C\to\g^{**}$, 
\ie it satisfies $\ka(\ga\cdot c)=\Th^{**}_{\ga}\ka(c)$ for all $c\in \C$, 
with $\Th^{**}_\ga$ the adjoint of $\Th^*_\ga$,
the Kelvin quantity $I(c,u)=\left(\ka(c),\frac{\de l}{\de u}\right)$ 
defined by $\ka$ is conserved for $u$ solution of the generalized right Euler-Poincar\'e equation \eqref{gen}, 
where $c=\ga\cdot c_0$, $c_0\in\C$, for $\ga$ a curve in $G$  with $u=\ga'\ga^{-1}$. 
\end{theorem}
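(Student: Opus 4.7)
The plan is to mimic the classical Noether argument from the coadjoint case, reducing the statement to Proposition \ref{genact} by using the $G$-equivariance of $\kappa$ to transfer the $G$-action from the $\mathfrak g^{**}$-argument onto the momentum $m = \delta l/\delta u$. Since Proposition \ref{genact} is already established, the theorem should follow from a short duality computation plus a single differentiation in $t$.

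More precisely, fix $c_0 \in \mathcal C$ and a curve $\gamma$ in $G$ with $u = \gamma'\gamma^{-1}$, and set $c(t) = \gamma(t)\cdot c_0$. The first step is to rewrite the Kelvin quantity along the flow using $G$-equivariance:
\begin{equation*}
I(c(t),u(t)) = \bigl(\kappa(\gamma(t)\cdot c_0),\, m(t)\bigr) = \bigl(\Theta^{**}_{\gamma(t)}\kappa(c_0),\, m(t)\bigr).
\end{equation*}
The definition of $\Theta^{**}_{\gamma}$ as the adjoint of $\Theta^{*}_{\gamma}$ then gives
\begin{equation*}
I(c(t),u(t)) = \bigl(\kappa(c_0),\, \Theta^{*}_{\gamma(t)} m(t)\bigr).
\end{equation*}
Now $\kappa(c_0)$ is independent of $t$, and by Proposition \ref{genact} the quantity $\Theta^{*}_{\gamma(t)} m(t)$ is constant along solutions of the generalized Euler-Poincar\'e equation \eqref{gen}. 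Pairing a constant element of $\mathfrak g^*$ with a fixed element of $\mathfrak g^{**}$ yields a constant, so $\frac{d}{dt} I(c(t),u(t)) = 0$.

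The main obstacle, already handled upstream, is Proposition \ref{genact} itself, which is the analogue for the action $\Theta^*$ of the coadjoint identity $\tfrac{d}{dt}(\Ad^*_\gamma m) = \Ad^*_\gamma\bigl(\tfrac{d}{dt}m + \ad^*_{\gamma'\gamma^{-1}} m\bigr)$; it requires checking that differentiating the right $G$-action $\Theta^*$ along $\gamma$ produces precisely the infinitesimal operator $\theta^*_u$ appearing in \eqref{gen}, so that the two terms cancel when $m$ solves the equation. Given that proposition, the theorem is essentially a formal transcription: the only thing to verify beyond that is the compatibility of the dual pairing between $\mathfrak g^{**}$ and $\mathfrak g^*$ with the adjoint relationship $\Theta^{**}_\gamma = (\Theta^{*}_\gamma)^*$, which is built into the hypothesis.
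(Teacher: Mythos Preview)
Your argument is correct and is essentially identical to the paper's proof: you use the $G$--equivariance of $\ka$ to write $I(\ga\cdot c_0,u)=(\Th^{**}_\ga\ka(c_0),m)=(\ka(c_0),\Th^*_\ga m)$ and then invoke Proposition~\ref{genact}. The paper compresses this into a single displayed line, but the logic is the same.
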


\begin{proof}
We have
\[
I(\ga. c _0,u)=\Big(\Th_{\ga}^{**} \ka(c_0),\frac{\de l}{\de u}\Big)=\Big(\ka(c_0), \Th^*_{\ga}\frac{\de l}{\de u}\Big),
\]
hence the result follows from proposition \ref{genact}.
\end{proof}

\paragraph{Tensor densities.}

A tensor density of weight $\lambda\geq 0$ (respectively $\lambda<0$) 
on $S^{1}$ is a section of the bundle $\bigotimes^{\lambda} T^{*}S^{1}$ 
(respectively $\bigotimes^{-\lambda}TS^{1}$). We refer to \cite{gr}, \cite{ot} or \cite{o} for basic facts about the space of tensor densities. 
There is a well-defined (right) action of the diffeomorphism group 
$\mathrm{Diff}(S^1)$ on each density module 
\begin{equation*} 
\mathcal{F}_{\lambda} 
=
\left\{ mdx^{\lambda}: m\in C^{\infty}(S^{1}) \right\}.
\end{equation*}  
given by 
\begin{equation}  \label{3-action} 
\ga\cdot (m dx^\lambda) =
(m\circ\ga) \, (\ga')^\lambda dx^\lambda, 
\qquad 
\ga \in \mathrm{Diff}(S^1), 
\end{equation}
which naturally generalizes the coadjoint action 
$
\mathrm{Ad}^\ast$ 
on the regular dual of the Lie algebra $\X(S^1)$, identified with the space of quadratic differentials. 
The infinitesimal generator of the action in \eqref{3-action} 
is easily calculated,
\begin{equation} \label{Lie*} 
L_{u}^{\lambda}(m dx^{\lambda}) 
= 
\left( um'+\lambda u'm \right) dx^{\lambda}, 
\end{equation} 
and can be thought of as the Lie derivative of tensor densities. 
It represents the right action of $\X(S^1)$ on $\mathcal{F}_{\lambda}$.
The adjoint action \eqref{adjo}
 is the left action on $\mathcal{F}_{-1}$
and the 
coadjoint action is the right action on $\mathcal{F}_2$.
In general, the dual space to $\la$--densities is the space of $(1-\la)$--densities.

Replacing the coadjoint action
$\ad^*_um=um'+2u'm$, which is the action on 2-densities on the circle,
with the action on $\la$-densities, we write down the
generalized Euler-Poincar\'e equation on $\Diff(S^1)$ as
\[
\partial_tm=-um'-\la u'm.
\]
This approach of using the action of tensor densities has been introduced in \cite{LMT} to 
extend Arnold's geometric framework to include Degasperis-Procesi and $\mu$DP equations.


\begin{example}\label{ex:DP}
The Degasperis-Procesi equation \cite{DP}
\begin{equation}\label{DP}
\partial_t u -\partial_t u'' +4uu'-uu'''-3u'u''=0
\end{equation}
admits a Lax pair and a bihamiltonian structure.
It has a geometric interpretation 
on the space of tensor densities on the circle \cite{LMT}. 
Let $\Th$ be the left action of $\Diff(S^1)$ on $\mathcal{F}_{-2}$
and $\Th^*$ the right action of $\Diff(S^1)$ on $\mathcal{F}_3$ its dual. 
The corresponding generalized Euler-Poincar\'e equation on $\Diff(S^1)$ for the right invariant $H^1$ Lagrangian 
is the Degasperis-Procesi equation:
\[
 \partial_t m= -um'-3u'm,\quad m=u-u''.
\]
Applying proposition \ref{genact} we obtain the conserved quantity 
$$
\Th^*_\ga (m)=(m\o\ga)(\ga')^3, \quad m=u-u''
$$ 
for $\ga'\ga^{-1}=u$. This conserved quantity is observed both in \cite{EK} and \cite{LMT}.
\end{example}


\begin{example}\label{ex:muDP}
The $\mu$DP equation \cite{LMT}
\begin{equation}\label{muDP}
\mu(\partial_t u)-\partial_t u''+3\mu(u)u'-3u'u''-uu'''=0
\end{equation}
is an integrable equation with Lax pair formulation and bihamiltonian structure. It can be written in the form 
$$
\partial_t m=-um'-3u' m,\quad m=\mu(u)-u''.
$$ 

Like in example \ref{ex:DP} concerning the Degasperis-Procesi equation, we replace the coadjoint action $\ad^*_u$ by $\th^*_u = u\partial_x + 3u'$. Applying proposition \ref{genact} to the $\mu$DP equation, we obtain the conserved quantity 
\[ 
\Th^*_\ga (m)=(m\o\ga)(\ga')^3, \quad m=\mu(u)-u''
\]
for $\ga'\ga^{-1}=u$. 
This conservation is used in \cite{LMT} to prove a global existence theorem for the periodic Cauchy problem for $\mu$DP equation. 
\end{example}

 
\begin{example}\label{ex:gepdiff}
A generalized Euler-Poincar\'e equation on $\Diff(M)$, the diffeomorphism group of a Riemannian manifold
with canonical volume form $\mu$, can be obtained by considering 
the (right) $\Diff(M)$ action $\Th^*$ on $\Om^1(M)$
\begin{equation}\label{bigt}
\Th_\ga^*\al=J(\ga)^{\la-1}\ga^*\al,
\end{equation}
where  $J(\ga)$ denotes the Jacobian of the diffeomorphism $\ga$ with respect to the volume form $\mu$,
\ie $\ga^*\mu=J(\ga)\mu$. 
The infinitesimal action is $\th_u^*\al=L_u\al+(\la-1)(\div u)\al$.
For $\la=1$ it is the canonical action on 1-forms.
For $\la=2$ we recover the coadjoint action $\Ad^*_\ga\al=J(\ga)\ga^*\al$,
while identifying the space of 1-form densities, the regular dual of $\X( M)$, with $\Om^1( M)$.
In analogy to the case of tensor densities on the circle,
we say that the action \eqref{bigt} is the action of $\Diff(M)$ on the space 
$\Om^1(M)\otimes\F_{\la-1}(M)$ of 1-form $(\la-1)$--densities.

The generalized right Euler-Poincar\'e equation \eqref{gen} written for the Lagrangian $l$ 
given as in \eqref{quadratic} by a symmetric operator $\Ph$ on $\X( M)$ is
simply 
$$
\partial_t\al+L_u\al+(\la-1)(\div u)\al=0,\quad \al=\Ph(u)^\flat.
$$ 
We call this the {\it generalized EPDiff equation}. A more familiar form is
\begin{equation}\label{eq:gEPDiff}
\partial_t \mathbf{m}+u\cdot\nabla \mathbf{m}+(\nabla u)^\top\cdot \mathbf{m}+(\la-1)\mathbf{m}(\div u)=0,
\quad\mathbf{m}=\Ph(u). 
\end{equation}
In the special case $\la=3$ and $\mathbf{m}=u-\De u$ 
it extends the Degasperis-Procesi equation to higher dimensions.

Like for the EPDiff equation in section 2, there is a circulation result also for the generalized EPDiff equation. 
The group of diffeomorphisms acts on the loop space $\mathcal{L}(M)$ by $\ga\cdot c=\ga\o c$ and on the space $\F_{\la-1}(M)$ of $(\la-1)$--densities (identified with the space of smooth functions on $M$) from the left by 
$\ga^{-1}\cdot f=J(\ga)^{\la-1}(f\o\ga)$, $f\in C^\oo(M)$.
Considering the action $\Th^{**}$ on $\X( M)^{**}$, 
dual to the action $\Th^*$ in \eqref{bigt}, we obtain an equivariant map
\[
\ka:\mathcal{L}(M)\x\F_{\la-1}(M)\to\X( M)^{**},\quad (\ka(c,f),\al)=\int_ c \frac{1}{f}\al,\quad\al\in\Om^1(M), 
\]
generalizing the equivariant map \eqref{equiv}. Indeed,
\[
\left(\ka(\ga\cdot c,\ga\cdot f),\al\right)
=\int_{\ga\o c}\frac{1}{(f\o\ga^{-1})J(\ga^{-1})^{\la-1}}\al
=\int_c\frac{J(\ga)^{\la-1}}{f}\ga^*\al 
=\left(\ka(c,f),\Th^*_\ga\al\right).
\]

The associated Kelvin quantity is $I=\int_c\frac{1}{f}\frac{\de l}{\de u}$, with $\frac{\de l}{\de u}=\Ph(u)^\flat\in\Om^1(M)$.
Now the abstract Noether theorem \ref{noet2} ensures that for a loop $c$ in $ M$ and a $(\lambda-1)$--density $f$ on $ M$, both driven by the generalized EPDiff flow \eqref{eq:gEPDiff},
the Kelvin quantity $\int_ c \frac{1}{f}\mathbf{m}^\flat$ is conserved along \eqref{eq:gEPDiff}.

\end{example}


\section{Euler-Poincar\'e equations on homogeneous spaces}\label{s4}

In this paragraph we study the Euler-Lagrange equations for left invariant Lagrangians on the tangent bundle of a homogeneous space $G/H$ of left cosets. In a similar way a right invariant Lagrangian on the tangent bundle of a homogeneous space $H\setminus G$ of right cosets
can be treated.
The left Euler-Poincar\'e equations \eqref{EPleft} on a Lie group $G$  
are written in terms of left logarithmic derivatives 
$u=\ga^{-1}\ga'=\de^l\ga$ of curves in $G$.
The right logarithmic derivative is used in right Euler-Poincar\'e equations.
For writing Euler-Poincar\'e equations on homogeneous spaces we will need a kind of logarithmic derivative for curves in homogeneous spaces.

Smooth curves in $G/H$ can always be lifted to smooth curves in $G$, since $\pi:G\to G/H$ is a principal bundle.
Given a smooth curve $\bar\ga:I=[0,1]\to G/H$, we compare the left logarithmic derivatives of two smooth lifts $\ga,\ga_1:I\to G$ 
of $\bar\ga$, \ie $\bar\ga=\pi\o\ga=\pi\o\ga_1$.
There exists a smooth curve $h:I\to H$ such that $\ga_1=\ga h$,
hence 
$$
u_1=\de^l\ga_1=\de^l(\ga h)=h^{-1}\ga^{-1}(\ga' h+\ga h')
=\Ad(h^{-1}) u+\de^lh
$$
for $u=\de^l\ga:I\to\g$.
We notice that $u_1$ is obtained from $u$ via a right action of the group element $h\in C^\oo(I,H)$:
\begin{equation}\label{rightaction}
u\cdot h=\Ad(h^{-1}) u+\de^lh.
\end{equation}
It is a right action because of the identity
$\de^l(h_1h_2)=\Ad(h_2^{-1})\de^lh_1+\de^lh_2$.
This means one can define the {\it left logarithmic derivative} $\bar\de^l$ of a curve $\bar\ga$ in $G/H$ as an orbit under 
the right action \eqref{rightaction} of $C^\oo(I,H)$ on $C^\oo(I,\g)$, namely the orbit $ u\cdot C^\oo(I,H)$ of the left logarithmic derivative $ u$ of an arbitrary  lift $\ga:I\to G$ of $\bar\ga$, so
\[
\bar\de^l:C^\oo(I,G/H)\to C^\oo(I,\g)/C^\oo(I,H),\quad \bar\de^l\bar\ga=\de^l\ga\cdot C^\oo(I,H).
\]
When the subgroup $H$ is trivial, we recover the ordinary logarithmic derivative $\de^l$ for curves in $G$.

\begin{remark}\label{deright}
In the same way one defines a {\it right logarithmic derivative} $\bar\de^r$ for 
curves on the homogeneous space $H\setminus G$ of right cosets
\[
\bar\de^r:C^\oo(I,H\setminus G)\to C^\oo(I,\g)/C^\oo(I,H),
\quad \bar\de^r\bar\ga=C^\oo(I,H)\cdot\de^r\ga.
\]
where the group $C^\oo(I,H)$ acts on $C^\oo(I,\g)$ from the left by
\begin{equation}\label{leftaction}
h\cdot u=\Ad(h) u+\de^r h.
\end{equation}
\end{remark}

\begin{example}\label{diffss}
The rigid rotations of the circle form a subgroup $H$,
isomorphic to $S^1$, of the group $G=\Diff(S^1)$ 
of diffeomorphisms of the circle. The left action 
\eqref{leftaction} of
the group $C^\oo(I,S^1)$ on $C^\oo(I,\X(S^1))$ is
\begin{equation}\label{acte}
(a\cdot u)(t)(x)=u(t)(x-\tilde a(t))+\tilde a'(t),\quad t\in I,x\in\RR,
\end{equation}
where 
$\tilde a\in C^\oo(I,\RR)$ is any lift of the group element
$a\in C^\oo(I,S^1)$ and vector fields on $S^1$ are identified with periodic functions on $\RR$.
This action is involved in the definition of the right logarithmic derivative on the homogeneous space $S^1\setminus\Diff(S^1)$:
\[
\bar\de^r:C^\oo(I,S^1\setminus \Diff(S^1))\to 
C^\oo(I,\X(S^1))/C^\oo(I,S^1).
\]
\end{example}

The tangent bundle $TG$ of a Lie group $G$ carries a natural group multiplication, the tangent map of the group multiplication on $G$:
$\xi_g\cdot\et_a=g\et_a+\xi_ga$ for $\xi_g\in T_gG$ and $\et_a\in T_aG$.
Given a subgroup $H$ of $G$, its tangent bundle $TH$ is a subgroup of $TG$ and 
the submersion $T\pi:TG\to T(G/H)$ induces a diffeomorphism
between $TG/TH$ and $T(G/H)$.

A left $G$--invariant Lagrangian $\bar L:T(G/H)\to\RR$ 
determines a left $G$--invariant and right $TH$--invariant
Lagrangian $L=\bar L\o T\pi:TG\to\RR$,
because $T\pi$ is at the same time left $G$--equivariant and right
$TH$--invariant.
The left $G$--invariance and right $TH$--invariance of $L$
translates into $H$--invariance under adjoint action and $\h$--invariance under vector addition of 
its restriction $l:\g\to\RR$. The $\h$--invariance of $l$ is a direct consequence of the $TH$--invariance of $L$.
That $l$ is $\Ad(H)$--invariant follows from
\[
l(\Ad(h)\xi)=L(h\xi h^{-1})=L((h\xi)\cdot 0_{h^{-1}})
=L(h\xi)=l(\xi),
\]
for all $h\in H$ and $\xi\in\g$. 

The other way around, the left $G$--invariant $L$ defined by $l$
is also right $TH$--invariant. This follows from
\begin{align*}
L(\xi_g\cdot\ze_h)&=L(g\ze_h+\xi_gh)=l(h^{-1}\ze_h+h^{-1}g^{-1}\xi_gh)
=l(\Ad(h^{-1})(g^{-1}\xi_g))=l(g^{-1}\xi_g)\\
&=L(\xi_g),\text{ for all }\xi_g\in TG\text{ and }\ze_h\in TH, 
\end{align*}
using the $\h$--invariance of $l$ 
at step three and the $\Ad(H)$--invariance of $l$ at step four.

The left $G$--invariant Lagrangian $\bar L:T(G/H)\to\RR$ is uniquely determined by its restriction $\bar l$ to the tangent space $T_o(G/H)=\g/\h$ at $o=eH\in G/H$. 
Let $p:\g\to\g/\h$ denote the canonical projection.
Then $l=\bar l\o p$ is the restriction of $L=\bar L\o T\pi$.
The adjoint action of $H$ on $\g$ induces an adjoint action of $H$ on $\g/\h$.
The $\Ad(H)$--invariance of $l$ translates into an $\Ad(H)$--invariance of $\bar l$.

We summarize all these results in the next proposition.

\begin{proposition}
The following are equivalent data:
\begin{enumerate}
\item left $G$--invariant function $\bar L$ on $T(G/H)$;
\item right $TH$--invariant and left $G$--invariant function $L$ on $TG$;
\item $\h$--invariant and $\Ad(H)$--invariant function $l$ on $\g$;
\item $\Ad(H)$--invariant function $\bar l$ on $\g/\h$.
\end{enumerate}
\end{proposition}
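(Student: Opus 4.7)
The plan is to establish the cycle $(1)\Leftrightarrow(2)\Leftrightarrow(3)\Leftrightarrow(4)$ using the canonical projections $T\pi\colon TG\to T(G/H)$ and $p\colon\g\to\g/\h$. Most of the substantive verifications are already spelled out in the paragraphs immediately preceding the proposition, so the proof is largely a matter of organizing them and pointing to those calculations.

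For $(1)\Leftrightarrow(2)$, I would use that $T\pi$ is a surjective submersion which is simultaneously left $G$--equivariant and right $TH$--invariant, so it induces a diffeomorphism $TG/TH\cong T(G/H)$. Pulling back yields $L=\bar L\o T\pi$; left $G$--invariance is transferred by equivariance and right $TH$--invariance is automatic from the invariance of $T\pi$. Conversely, any right $TH$--invariant $L$ descends uniquely through $T\pi$, and left $G$--invariance of $L$ descends to left $G$--invariance of $\bar L$.

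For $(2)\Leftrightarrow(3)$, left $G$--invariance lets me reconstruct $L$ from its restriction $l=L|_\g$ via $L(\xi_g)=l(g^{-1}\xi_g)$. In the forward direction, $\h$--invariance of $l$ under addition comes from specializing right $TH$--invariance to vectors $\et\in\h\subset T_eH$ added to $\xi\in\g=T_eG$, while the $\Ad(H)$--invariance is the identity $l(\Ad(h)\xi)=L(h\xi h^{-1})=L((h\xi)\cdot 0_{h^{-1}})=L(h\xi)=l(\xi)$ already displayed in the text. For the converse, the explicit decomposition $\xi_g\cdot\ze_h=g\ze_h+\xi_gh$, combined with $\h$--invariance applied to $h^{-1}\ze_h\in\h$ and $\Ad(H)$--invariance of $l$, is exactly the computation in the text that yields right $TH$--invariance of the left-invariant extension $L$.

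For $(3)\Leftrightarrow(4)$, $\h$--invariance under addition is precisely the statement that $l$ is constant on cosets $\xi+\h$, so it factors uniquely as $l=\bar l\o p$ for some $\bar l\colon\g/\h\to\RR$; conversely every such $\bar l$ pulls back to an $\h$--invariant $l$. Since $H$ is a subgroup of $G$, the adjoint action $\Ad(H)$ preserves $\h$ and hence descends to an action on $\g/\h$, under which $\Ad(H)$--invariance of $l$ and of $\bar l$ correspond. The only mildly nontrivial step in the whole chain is the direction $(3)\Rightarrow(2)$, namely that the two separate invariances of $l$ together suffice to force right $TH$--invariance (and not merely right $H$--invariance) of $L$; this is settled by the block computation displayed before the proposition, so nothing further needs to be checked.
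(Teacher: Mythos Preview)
Your proposal is correct and follows exactly the paper's approach: the paper does not supply a separate proof but rather states ``We summarize all these results in the next proposition'' after carrying out the very computations you cite (the identification $TG/TH\cong T(G/H)$ via $T\pi$, the two displayed calculations for $\Ad(H)$--invariance of $l$ and for right $TH$--invariance of $L$, and the factorization $l=\bar l\circ p$). Your cycle $(1)\Leftrightarrow(2)\Leftrightarrow(3)\Leftrightarrow(4)$ is precisely the organization of those paragraphs into a formal argument.
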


The next result ensures that the Euler-Lagrange equations for left 
$G$--invariant Lagrangians on $T(G/H)$ 
look similar to left Euler-Poincar\'e equations.

\begin{theorem}\label{elhomo}
A solution of the Euler-Lagrange equation for a left 
$G$--invariant Lagrangian $\bar L:T(G/H)\to\RR$ 
is a curve $\bar\ga$ in $G/H$
such that the left logarithmic derivative $u=\ga^{-1}\ga'$
of a lift $\ga$ of $\bar\ga$ satisfies the left Euler-Poincar\'e equation
\begin{equation}\label{leftep}
\frac{d}{dt}\frac{\de l}{\de u}=\ad^*_u\frac{\de l}{\de u},
\end{equation}
for $l$ the ($\h$--invariant and $\Ad(H)$--invariant) restriction of $L=\bar L\o T\pi$ to $\g$.
\end{theorem}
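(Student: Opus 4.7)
My plan is to transfer the Euler-Lagrange problem from $T(G/H)$ to $TG$ using the lifted Lagrangian $L=\bar L\circ T\pi$, and then to invoke the classical left-invariant Euler-Poincar\'e reduction, which converts the Euler-Lagrange equation for a left $G$-invariant Lagrangian on $TG$ into \eqref{leftep} for $u=\de^l\ga$.

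Concretely, I would fix a smooth lift $\ga:I\to G$ of $\bar\ga$, which exists because $\pi:G\to G/H$ is a principal $H$-bundle. Any fixed-endpoint variation $\ga_s$ of $\ga$ projects to a fixed-endpoint variation $\bar\ga_s=\pi\circ\ga_s$ of $\bar\ga$, and since $L=\bar L\circ T\pi$, the two action integrals coincide:
$$\int_0^1 L(\ga_s,\ga_s')\,dt=\int_0^1 \bar L(\bar\ga_s,\bar\ga_s')\,dt.$$
If $\bar\ga$ satisfies the Euler-Lagrange equations for $\bar L$, then the right-hand first variation at $s=0$ vanishes for every such $\bar\ga_s$, and since every fixed-endpoint variation of $\ga$ produces a fixed-endpoint variation of $\bar\ga$, the left-hand first variation vanishes as well. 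Hence $\ga$ solves the Euler-Lagrange equations for the left $G$-invariant Lagrangian $L$ on $TG$, and the classical reduction for left-invariant Lagrangians (the left analogue of \eqref{EP}) yields precisely \eqref{leftep} with reduced Lagrangian $l=L|_\g$.

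Two short verifications round out the argument. First, the choice of lift is inessential: for another lift $\ga_1=\ga h$ with $h:I\to H$, the same variational argument applied directly to $\ga_1$ shows that $u_1=\Ad(h^{-1})u+\de^l h$ also solves \eqref{leftep}, so the conclusion is intrinsic to $\bar\ga$. Second, if one wishes to establish the converse, any fixed-endpoint variation $\bar\ga_s$ of $\bar\ga$ lifts to a variation $\ga_s$ of $\ga$ whose endpoints may wander in $H$-cosets; multiplying on the right by a suitable $H$-valued path and using the right $TH$-invariance of $L$ converts $\ga_s$ into a fixed-endpoint variation without altering the action integral, forcing criticality of $\bar\ga$.

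I do not anticipate any substantive obstacle. The core is bookkeeping: checking that fixed-endpoint variations descend cleanly through $\pi$ (which relies only on $\pi$ being a submersion), and unpacking the standard derivation of the left Euler-Poincar\'e equation from the left-invariant Euler-Lagrange equation on $G$ via integration by parts together with the defining identity for $\ad^*$. Notice that the $\Ad(H)$-invariance and $\h$-invariance of $l$ are automatic from the setup and do not need to be used in the reduction itself — they merely guarantee that \eqref{leftep} is a well-posed equation on the quotient data encoded by $\bar\ga$.
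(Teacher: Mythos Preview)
Your proposal is correct but takes a genuinely different route from the paper. You \emph{descend}: fixed-endpoint variations of $\ga$ in $G$ project via $\pi$ to fixed-endpoint variations of $\bar\ga$ in $G/H$, so criticality of $\bar\ga$ for $\bar L$ forces criticality of $\ga$ for $L$, and the standard left Euler--Poincar\'e reduction on $G$ yields \eqref{leftep} with no boundary terms to manage. The paper instead \emph{lifts}: a fixed-endpoint variation of $\bar\ga$ is lifted to a variation of $\ga$ whose endpoints lie only in the $H$-fibres, producing boundary contributions $\big(\tfrac{\de l}{\de u},v_0\big)$ and $\big(\tfrac{\de l}{\de u},v_1\big)$ with $v_0,v_1\in\h$; these are killed precisely by the $\h$-invariance of $l$, since then $\tfrac{\de l}{\de u}$ annihilates $\h$. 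So the paper makes essential use of $\h$-invariance inside the reduction, whereas in your argument it plays only the auxiliary role you mention. For independence of the lift, you simply re-run the variational argument on a second lift $\ga_1$, while the paper verifies by direct computation that
\[
\frac{d}{dt}\frac{\de l}{\de u_1}-\ad^*_{u_1}\frac{\de l}{\de u_1}
=\Ad^*_h\Big(\frac{d}{dt}\frac{\de l}{\de u}-\ad^*_u\frac{\de l}{\de u}\Big),
\]
using $u_1=\Ad(h^{-1})u+\de^l h$ and $\tfrac{\de l}{\de u_1}=\Ad^*_h\tfrac{\de l}{\de u}$. Your route is more economical for the forward implication; the paper's route handles both implications in a single computation and makes the role of the $\h$-invariance more visible.
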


\begin{proof}
A variation with fixed endpoints of the curve $\bar\ga$ in $G/H$
can be lifted to a variation with endpoints in $H$ of a lift $\ga$ in $G$ of $\bar\ga$, \ie $\bar\ga=\pi\o\ga$. Considering the Lagrangian $L=\bar L\o T\pi$ on $TG$ and denoting by $u=\ga^{-1}\ga'$ the left logarithmic derivative, we have:
\begin{multline*}
0=\de\int \bar L(\bar\ga'(t))dt
=\de\int L(\ga'(t))dt=\de\int l(u)dt
=\int\Big(\frac{\de l}{\de u},\de u\Big) dt\\
=\int\Big(\frac{\de l}{\de u},\frac{dv}{dt}+\ad_uv\Big) dt
=\int\frac{d}{dt}\Big(\frac{\de l}{\de u},v\Big) dt
-\int\Big(\frac{d}{dt}\frac{\de l}{\de u},v\Big) dt\\
+\int\Big(\frac{\de l}{\de u},\ad_uv\Big) dt
=\Big(\frac{\de l}{\de u},v_1\Big)
-\Big(\frac{\de l}{\de u},v_0\Big)
+\int\Big(-\frac{d}{dt}\frac{\de l}{\de u}+\ad^*_u\frac{\de l}{\de u},v\Big) dt.
\end{multline*}
Here $v$ denotes the left logarithmic derivative of the variation 
of $\ga$, so $\de u=\frac{dv}{dt}+[u,v]$.
The variation of $\ga$ has endpoints in $H$, so $v_0,v_1\in\h$.
The $\h$--invariance of $l$ ensures that $\frac{\de l}{\de u}$ vanishes on $\h$, hence the previous calculation gives
\[
\int\Big(\frac{d}{dt}\frac{\de l}{\de u}-\ad^*_u\frac{\de l}{\de u},v\Big) dt=0.
\]
It follows that $u$ satisfies the Euler-Poincar\'e equation \eqref{leftep}.

It is easy to verify that if the Euler-Poincar\'e equation is
satisfied by the logarithmic derivative of one lift of $\bar\ga$, it is satisfied 
by the logarithmic derivative of any lift of $\bar\ga$. Let $\ga$ and $\ga_1=\ga h$ be two such lifts and let $u$ and $u_1$ be their left logarithmic derivatives. Then
\begin{align*}
\frac{d}{dt}\frac{\de l}{\de u_1}-\ad^*_{u_1}\frac{\de l}{\de u_1}
&=\frac{d}{dt}\Big(\Ad^*_h\frac{\de l}{\de u} \Big)
-\ad^*_{(\Ad(h^{-1})u+\de^lh)}\Big(\Ad^*_h\frac{\de l}{\de u}\Big) \\
&=\Ad^*_h\Big(\frac{d}{dt}\frac{\de l}{\de u}\Big) 
-\ad^*_{(\Ad(h^{-1})u)}\Big(\Ad^*_h\frac{\de l}{\de u}\Big) \\
&=\Ad^*_h\Big(\frac{d}{dt} \frac{\de l}{\de u} -\ad^*_u\frac{\de l}{\de u}\Big).
\end{align*}
because $u_1=\Ad(h^{-1})u+\de^lh$ and
$\frac{\de l}{\de u_1} =\frac{\de l}{\de u} \o\Ad_h=\Ad^*_h\frac{\de l}{\de u} $.
\end{proof}

We call \eqref{leftep} the left Euler-Poincar\'e equation on the homogeneous manifold $G/H$. 
One has an orbit invariant for this equation, 
similar to \eqref{vval}. 

\begin{proposition}
The quantity $\Ad^*_{\ga^{-1}}\frac{\de l}{\de u}\in\g^*$
is conserved along the left Euler Poincar\'e equation \eqref{leftep} on $G/H$ with 
$\h$--invariant and $\Ad(H)$--invariant Lagrangian function $l$ on $\g$,
where $\ga$ is any lift of $\bar\ga$ and $u=\de^l\ga$.
\end{proposition}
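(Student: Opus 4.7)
The plan is to establish two things: (a) the quantity $\Ad^*_{\ga^{-1}}\frac{\de l}{\de u}$ does not depend on the choice of lift $\ga$ of $\bar\ga$, so that it is genuinely associated to the curve $\bar\ga$ in $G/H$; and (b) it is constant in $t$ along any solution of \eqref{leftep}.

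For (b), I would specialize the identity from Section \ref{s2},
\[
\frac{d}{dt}(\Ad^*_\si m)=\Ad^*_\si\Big(\frac{d}{dt}m+\ad^*_{\si'\si^{-1}}m\Big),
\]
to the curve $\si=\ga^{-1}$. Differentiating $\ga\ga^{-1}=e$ gives $\si'\si^{-1}=-\ga^{-1}\ga'=-u$, so the identity becomes
\[
\frac{d}{dt}\bigl(\Ad^*_{\ga^{-1}}m\bigr)=\Ad^*_{\ga^{-1}}\Big(\frac{d}{dt}m-\ad^*_u m\Big).
\]
Taking $m=\frac{\de l}{\de u}$, the bracket on the right vanishes by \eqref{leftep}, and conservation follows.

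For (a), let $\ga_1=\ga h$ be a second lift of $\bar\ga$, with $h\in C^\oo(I,H)$. The calculations at the end of the proof of Theorem \ref{elhomo} already record the consequences of the $\h$-invariance and $\Ad(H)$-invariance of $l$, namely $u_1=\Ad(h^{-1})u+\de^l h$ and $\frac{\de l}{\de u_1}=\Ad^*_h\frac{\de l}{\de u}$. Using that $\Ad^*$ is a right action, hence $\Ad^*_{g_1g_2}=\Ad^*_{g_2}\Ad^*_{g_1}$, one obtains
\[
\Ad^*_{\ga_1^{-1}}\frac{\de l}{\de u_1}
=\Ad^*_{h^{-1}\ga^{-1}}\Big(\Ad^*_h\frac{\de l}{\de u}\Big)
=\Ad^*_{\ga^{-1}}\Ad^*_{h^{-1}}\Ad^*_h\frac{\de l}{\de u}
=\Ad^*_{\ga^{-1}}\frac{\de l}{\de u},
\]
proving the independence from the lift.

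The argument will reduce to these two short calculations, so no real obstacle arises; the only point requiring attention is the book-keeping for the right-action conventions, namely the order reversal $\Ad^*_{g_1g_2}=\Ad^*_{g_2}\Ad^*_{g_1}$ and the sign change when passing from the left logarithmic derivative of $\ga$ to the right logarithmic derivative of $\ga^{-1}$.
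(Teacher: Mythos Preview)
Your proposal is correct and follows the paper's approach. The paper itself only writes out the lift-independence computation (your part (a)), treating the conservation (your part (b)) as already known from Section~\ref{s2}, where it is stated that $\Ad^*_{\ga^{-1}}m$ is conserved along the left Euler--Poincar\'e equation; your explicit derivation of (b) via the identity for $\frac{d}{dt}(\Ad^*_\si m)$ with $\si=\ga^{-1}$ is exactly the intended justification.
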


The independence on the choice of the lift $\ga$ is immediate: for $\ga_1=\ga h$,
\[
\Ad^*_{\ga_1^{-1}}\frac{\de l}{\de u_1}=\Ad^*_{(\ga h)^{-1}}\Ad^*_h\frac{\de l}{\de u}=\Ad^*_{\ga^{-1}}\frac{\de l}{\de u}.
\] 


The abstract Noether theorem written for left Euler-Poincar\'e equations on Lie groups ensures that the Kelvin quantity \eqref{icu}
is conserved for $u$ solution of \eqref{EPleft}.
One can formulate an analogous abstract Noether theorem for homogeneous spaces.

\begin{theorem}
Considering a $G$--manifold $\C$ and a map 
$\ka:\C\to\g^{**}$ which is $G$--equivariant, the Kelvin quantity 
\begin{equation}\label{icun}
I:\C\x\g\to\RR,\quad I( c ,u)=\Big(\ka( c ),\frac{\de l}{\de u}\Big)
\end{equation}
is conserved along solutions $\bar\ga$ of the Euler-Lagrange equation on $G/H$ with left invariant Lagrangian $\bar L$, namely for $\ga$ a curve in $G$ lifting $\bar\ga$, $u$ its left logarithmic derivative and $ c =\ga^{-1}\cdot c _0$, $ c _0\in\C$. 
\end{theorem}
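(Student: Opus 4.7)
The plan is to reduce the statement to the orbit invariant $\Ad^*_{\ga^{-1}}\frac{\de l}{\de u}$ established in the preceding proposition, using the same computational trick that proved the abstract Noether theorem on Lie groups in Section~\ref{s2}, but with the sign conventions appropriate to the left case on $G/H$.

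First I would invoke Theorem~\ref{elhomo}: if $\bar\ga$ solves the Euler-Lagrange equation for $\bar L$, then for any lift $\ga:I\to G$ of $\bar\ga$ the left logarithmic derivative $u=\de^l\ga$ satisfies the left Euler-Poincar\'e equation \eqref{leftep}. By the proposition immediately preceding the statement, this implies that the $\g^*$-valued function $t\mapsto\Ad^*_{\ga(t)^{-1}}\frac{\de l}{\de u}$ is constant in $t$.

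Next I would exploit the $G$-equivariance of $\ka:\C\to\g^{**}$. The relevant left $G$-action on $\g^{**}$ is the double dual $\Ad^{**}$ of the coadjoint right action on $\g^*$, so equivariance yields $\ka(\ga^{-1}\cdot c_0)=\Ad^{**}_{\ga^{-1}}\ka(c_0)$. Substituting $c=\ga^{-1}\cdot c_0$ into the Kelvin quantity \eqref{icun} and moving $\Ad^{**}_{\ga^{-1}}$ across the duality pairing gives
\[
I(c,u)=\Big(\Ad^{**}_{\ga^{-1}}\ka(c_0),\frac{\de l}{\de u}\Big)=\Big(\ka(c_0),\Ad^*_{\ga^{-1}}\frac{\de l}{\de u}\Big),
\]
which is constant by the first step, since $\ka(c_0)$ is fixed and the remaining factor was shown to be conserved.

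The only point that needs care is the bookkeeping of left versus right actions: one must check that the action on $\g^{**}$ dual to the coadjoint right action is indeed a left action and that it is this left action with respect to which equivariance of $\ka$ is intended, so that the placement of $\ga^{-1}$ is consistent on both sides of the pairing. There is no analytic obstacle and no variational argument to redo; the theorem is a formal repackaging of the orbit invariant via equivariance, and well-definedness in the choice of lift is already taken care of by the proposition itself.
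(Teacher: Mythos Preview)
Your argument is correct and matches the paper's approach: both reduce conservation to the orbit invariant $\Ad^*_{\ga^{-1}}\frac{\de l}{\de u}$ via the $G$-equivariance of $\ka$, exactly as in the Lie-group Noether theorem. The only cosmetic difference is that the paper spells out the lift-independence of $I$ with a short explicit computation using $\ga_1=\ga h$, whereas you (correctly) infer it from the lift-independence of $\Ad^*_{\ga^{-1}}\frac{\de l}{\de u}$ already established in the preceding proposition.
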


\begin{proof}
This follows from the abstract Noether theorem on $G$ mentioned above.
In addition we verify that the Kelvin quantity does not depend 
on the choice of the lift $\ga$ of $\bar\ga$:
\begin{align*}
I( c _1,u_1)&=\Big(\ka(\ga_1^{-1}\cdot c _0),\frac{\de l}{\de u_1}\Big)
=\Big(\ka(h^{-1}\ga^{-1}\cdot c _0),\Ad^*_h\frac{\de l}{\de u} \Big)\\
&=\Big(\ka(\ga^{-1}\cdot c _0),\frac{\de l}{\de u} \Big)=I( c ,u), 
\end{align*}
where $\ga_1=\ga h$, $c=\ga^{-1}\cdot c _0$ and $c _1=\ga_1^{-1}\cdot c _0$.
\end{proof}


\paragraph{Geodesic equations on homogeneous spaces.}
A special Euler-Lagrange equation is the Euler equation on homogeneous spaces: geodesic equation 
for a left $G$--invariant Riemannian metric on $G/H$. 
It was  studied by Khesin and Misiolek in \cite{KM}, using the Hamiltonian point of view. 

Let $A:\g\to\g^*$ be a symmetric degenerate (inertia) operator with kernel $\h$, such that $A$ is $H$--equivariant. 
We consider the Lagrangian $l(\xi)=\frac12(A\xi,\xi)$ on $\g$.
Taking into account the symmetry of $A$, the $\h$--invariance of $l$ is easily checked: $l(\xi+\ze)=\frac12(A\xi+A\ze,\xi+\ze)=\frac12(A\xi,\xi)+\frac12(A\xi,\ze)
=l(\xi)+\frac12(\xi,A\ze)=l(\xi)$, 
for all $\xi\in\g$ and $\ze\in\h$. 
The $H$--equivariance of $A$ ensures the $\Ad(H)$--invariance of $l$,
so $l$ descends to an $\Ad(H)$--invariant Lagrangian $\bar l:\g/\h\to\RR$, as needed for the Euler-Poincar\'e equation 
\eqref{leftep} on homogeneous spaces.

Now $m=\frac{\de l}{\de u}=Au$, so the left Euler-Poincar\'e equation
on $G/H$ writes $\frac{d}{dt}Au=\ad^*_u(Au)$.
It is the image under the inertia operator $A$ of 
the Euler equation, the left invariant version of the Euler equation \eqref{euler}: 
$$
\frac{d}{dt}u=\ad(u)^\top u.
$$
This can be interpreted as the geodesic equation 
on $G/H$ for the left invariant Riemannian metric
coming from the degenerate inner product
$\langle\xi,\et\rangle=(A\xi,\et)$ on $\g$ \cite{KM}.

\begin{example}\label{ex:HS}
The Hunter-Saxton equation describing weakly nonlinear unidirectional waves \cite{HS}
\begin{equation}\label{hs}
\partial_tu''=-2u'u''-uu'''
\end{equation}
is a geodesic equation on the homogeneous space $S^1\setminus \Diff(S^1)$ of right cosets with the right invariant metric defined by the degenerate $\dot H^1$ inner product $\langle u_1,u_2\rangle=\int_{S^1}u'_1u'_2dx$ on $\X(S^1)$ \cite{KM}. It fits into the framework above when $A(u)=-u''$. The two conditions are easily verified: the kernel of $A$ is $\RR$, the Lie algebra of the subgroup of rigid rotations, and $A$ is $S^1$--equivariant.

In this case $l(u)=\frac12\langle u,u\rangle=\frac12\int_{S^1}(u')^2dx$,
so $m=\frac{\de l}{\de u}=-u''$ satisfies 
$$
\partial_tm=-um'-2u'm,
$$
which gives Hunter-Saxton equation \eqref{hs}.
It has to be read as an equation for the $C^\oo(I,S^1)$-orbit $\bar u=C^\oo(I,S^1)\cdot u$ of $u\in C^\oo(I,\X(S^1))$ under the left action \eqref{acte},
which plays the role of the right logarithmic derivative of a curve
$\bar\ga:I\to S^1\setminus\Diff(S^1)$.

A conserved quantity for the Hunter-Saxton equation is
$$
\Ad^*_\ga m
=-(u''\o\ga)(\ga')^2,
$$ 
where $\ga:I\to\Diff(S^1)$ is any lift of the curve $\bar\ga$.
\end{example}

\begin{example}\label{higherHS}
This example concerns the multidimensional Hunter-Saxton equation
from \cite{LMP}. 
Let $M$ be a compact manifold and let $\mu$ be a fixed volume form on $M$. We consider the homogeneous space  $\Diff_\mu(M)\setminus\Diff(M)$ of right cosets, where  $\Diff_\mu(M)$ is the subgroup of volume preserving diffeomorphisms of $M$.
The Lagrangian 
\[
l:\X(M)\to\RR,\quad l(u)=\frac12\int_M(\div u)^2\mu
\]
is both $\X_\mu(M)$--invariant and $\Ad(\Diff_\mu(M))$--invariant,
so we have a corresponding Euler-Poincar\'e equation
on the homogeneous space  $\Diff_\mu(M)\setminus\Diff(M)$:
\[
\pa_tm=-L_um,\quad m=\frac{\de l}{\de u}=-d(\div u)\mu\in\X(M)_{reg}^*
=\Om^1(M)\otimes\Den(M).
\]
It has a similar expression to Euler-Poincar\'e equation on $\Diff(M)$: the EPDiff equation in section \ref{s2}.

We replace the special form of the momentum $m$ and we drop the constant density $\mu$ to obtain the following equation in $\Om^1(M)$
\[
\pa_t d(\div u)=-d L_u (\div u)-(\div u) d(\div u).
\]
It coincides with the Hunter-Saxton equation when $M=S^1$: the subgroup of volume preserving diffeomorphisms of the circle is isomorphic to the subgroup of rigid rotations of the circle.
\end{example}

\begin{example}\label{LL}
Let $K$ be a Lie group with Lie algebra $\k$ possessing a $K$--invariant inner product $\langle\ ,\ \rangle_\k$.
The Lie algebra of the loop group $LK:=C^\oo(S^1,K)$ is the loop algebra $L\k=C^\oo(S^1,\k)$.
The subgroup of constant loops, identified with $K$,
defines the homogeneous space of right cosets $K\setminus LK$.

Each $\k$--invariant and $\Ad(K)$--invariant Lagrangian $l$ on $L\k$
determines a right Euler-Poincar\'e equation on $K\setminus LK$:
\begin{equation}\label{kapa}
\pa_tm=[u,m],\quad m=\frac{\de l}{\de u}. 
\end{equation}
Here $m$ is a curve in $L\k$,
since the inner product $\langle\ ,\ \rangle_\k$ permits the identification of the regular dual
of $L\k$ with $L\k$.
If one considers the Lagrangian defined by the $\dot H^1$ inner product: 
\[
l(u)=\frac12\int_{S^1}\langle u',u'\rangle_{\k} dx,
\]
then $m=-u''$ and the Euler-Poincar\'e equation \eqref{kapa} becomes 
$\pa_tu''=[u,u'']$.
Another possibility would be the Lagrangian 
\[
l(u)=\frac12\int_{S^1}\langle u,u\rangle_{\k} dx-\frac12\langle\mu(u),\mu(u)\rangle_{\k}
=\frac12\int_{S^1}\langle u-\mu(u),u-\mu(u)\rangle_{\k} dx.
\]
This time $m=u-\mu(u)$, so the equation \eqref{kapa} is
$\pa_tu-\mu(\pa_tu)=-[u,\mu(u)]$.

More important is the $\dot H^{-1}$ Lagrangian
\[
l(u)=\frac12\int_{S^1}\langle\partial_x^{-1}u,\partial_x^{-1}u\rangle_{\k}dx
=-\frac12\int_{S^1}\langle\partial_x^{-2}u,u\rangle_{\k}dx
\]
because it leads to the
Landau-Lifschitz equation \cite{AK} \cite{Kambe}.
It is well defined on the homogeneous space $\k\setminus L\k$, 
which can be identified with the space of all derivatives of loops in $\k$.
It is also $\Ad(K)$--invariant,
so it fits well into our setting for Euler-Poincar\'e equations on homogeneous spaces.
We get $m=-\pa_x^{-2}u$, so $u=-m''$, and the equation \eqref{kapa} 
becomes
$\partial_t m=[m,m'']$.
In the special case $K=SO(3)$ we get the 
Landau-Lifschitz equation 
\[
\pa_tL=L\x L'',
\]
where one identifies the Lie algebras $(\so(3),[\ ,\ ])$
and $(\RR^3,\x)$.
This equation is equivalent to the vortex filament equation
$\pa_t c=c'\x c''$, for $L=c'$ the tangent vector to the filament, 
a closed arc-parametrized time-dependent curve $c$ in $\RR^3$.
\end{example}


\section{Generalized Euler-Poincar\'e equations on homogeneous spaces}

The coadjoint action $\ad^*$ in the right Euler-Poincar\'e equation on the homogeneous space $H\setminus G$ of right cosets can be replaced with another action $\th^*$ to give
a generalized right Euler-Poincar\'e equation on $H\setminus G$,
similarly to the generalized Euler-Poincar\'e equation on Lie groups from section \ref{s3}.
This time we have to impose some conditions on $\th^*$, so that the following holds: 
if the right logarithmic derivative $u=\de^r\ga$ of one lift of the curve $\bar\ga$ satisfies the generalized Euler-Poincar\'e equation
$\frac{d}{dt}\frac{\de l}{\de u}=-\th^*_u\frac{\de l}{\de u}$,
then the right logarithmic derivatives of all the other lifts of $\bar\ga$ satisfy the same equation.
In other words the group $C^\oo(I,H)$ with the left action \eqref{leftaction} has to be a symmetry group of the above generalized Euler-Poincar\'e equation. 

\begin{proposition}\label{condi}
Let $H$ be a subgroup of $G$ with Lie algebra $\h$, and let $\th^*$ be a Lie algebra action of $\g$ on $\g^*$.
If the map $\th^*:\g\x\g^*\to\g^*$ is $H$--equivariant and if the action $\th^*$ restricted to $\h$ 
equals the coadjoint action $\ad^*$ restricted to $\h$,
then $C^\oo(I,H)$ is a symmetry group of the equation 
$\frac{d}{dt}\frac{\de l}{\de u}=-\th^*_u\frac{\de l}{\de u}$,
for the left action \eqref{leftaction}.
\end{proposition}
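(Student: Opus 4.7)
The plan is to verify directly that if $u$ satisfies the generalized right Euler-Poincar\'e equation $\frac{d}{dt}m = -\th^*_u m$ with $m=\frac{\de l}{\de u}$, then for any $h\in C^\oo(I,H)$ the curve $u_1 := h\cdot u = \Ad(h) u + \de^r h$ also satisfies the same equation. The argument closely parallels the final verification in the proof of Theorem \ref{elhomo}, with $\ad^*$ replaced by $\th^*$ and with right logarithmic derivatives in place of left.

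First I would establish the transformation rule $m_1 := \frac{\de l}{\de u_1} = \Ad^*_{h^{-1}} m$. Since $l$ is $\h$-invariant under addition and $\de^r h$ takes values in $\h$, a variation $v$ at $u_1$ feels only the $\Ad(h) u$ piece; the $\Ad(H)$-invariance of $l$ then transfers the variation to $u$ via $\Ad_{h^{-1}}$, which, after dualizing, yields the claimed identity. Next, I would differentiate $m_1$ in time using the standard formula $\frac{d}{dt}(\Ad^*_\ga m) = \Ad^*_\ga\bigl(\frac{d}{dt}m + \ad^*_{\ga'\ga^{-1}} m\bigr)$ applied to $\ga = h^{-1}$. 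A short calculation shows $(h^{-1})' h = -\Ad_{h^{-1}}(\de^r h)$, and combining this with the routine $H$-equivariance of $\ad^*$, namely $\Ad^*_{h^{-1}} \ad^*_{\Ad_{h^{-1}} \xi} = \ad^*_\xi \Ad^*_{h^{-1}}$, produces the key intermediate identity
$$
\frac{d}{dt} m_1 = \Ad^*_{h^{-1}} \tfrac{d}{dt} m - \ad^*_{\de^r h}\, m_1 .
$$

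Finally, substituting $\frac{d}{dt} m = -\th^*_u m$ on the left, and expanding the target $-\th^*_{u_1} m_1 = -\th^*_{\Ad(h) u} m_1 - \th^*_{\de^r h} m_1$ on the right by linearity of $\th^*$ in the $\g$-argument, the equation to verify reduces to
$$
-\Ad^*_{h^{-1}} \th^*_u m - \ad^*_{\de^r h}\, m_1 = -\th^*_{\Ad(h) u} m_1 - \th^*_{\de^r h}\, m_1 .
$$
The hypothesis $\th^*|_\h = \ad^*|_\h$, applied with $\de^r h \in \h$, cancels the two $\de^r h$-terms, leaving exactly the $H$-equivariance identity $\Ad^*_{h^{-1}} \th^*_u = \th^*_{\Ad(h) u} \Ad^*_{h^{-1}}$. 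Thus the two hypotheses enter in complementary ways: the coincidence of $\th^*$ and $\ad^*$ on $\h$ absorbs the chain-rule term produced by $\de^r h$, while $H$-equivariance handles the coadjoint transport of the $u$-term. I expect the main obstacle to be bookkeeping rather than anything conceptual: one must keep straight the right-action composition rule $\Ad^*_{g_1 g_2} = \Ad^*_{g_2} \Ad^*_{g_1}$, the sign of $(h^{-1})' h$, and the distinction between the left and right logarithmic derivatives of $h$. Once these are sorted, the verification is purely algebraic.
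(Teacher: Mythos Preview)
Your proposal is correct and follows essentially the same approach as the paper's proof: both compute $\partial_t(\Ad^*_{h^{-1}}m)+\th^*_{h\cdot u}(\Ad^*_{h^{-1}}m)$ and show it equals $\Ad^*_{h^{-1}}(\partial_t m+\th^*_u m)$, using $H$--equivariance of $\th^*$ for the $\Ad(h)u$ piece and $\th^*|_\h=\ad^*|_\h$ for the $\de^r h$ piece. Your write-up is slightly more explicit about the chain-rule identity $\partial_t(\Ad^*_{h^{-1}}m)=\Ad^*_{h^{-1}}\partial_t m-\ad^*_{\de^r h}(\Ad^*_{h^{-1}}m)$ and about why $m_1=\Ad^*_{h^{-1}}m$, but the structure of the argument is identical.
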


\begin{proof}
We have to show that for any solution $u\in C^\oo(I,\g)$ of the generalized right Euler-Poincar\'e equation, and for any $h\in C^\oo(I,H)$,
the curve $h\cdot u=\Ad(h)u+\de^rh$ is again a solution of the generalized right Euler-Poincar\'e equation. 
In the computation below, we will use the fact 
that $h$ acts on $m=\frac{\de l}{\de u}$ by the coadjoint action:
$h\cdot m=\Ad^*_{h^{-1}}m$. 

The $H$--equivariance of $\th^*$ means that
$\th^*_{\Ad(h)u}(\Ad^*_{h^{-1}}m)=\Ad^*_{h^{-1}}\th^*_um$.
Knowing also that $\th^*_{\de^rh}=\ad^*_{\de^rh}$ for any curve 
$h\in C^\oo(I,H)$, we compute
\begin{gather*}
\pa_t(h\cdot m)+\th^*_{h\cdot u}(h\cdot m)
=\partial_t(\Ad^*_{h^{-1}}m)+\th^*_{\Ad(h)u+\de^rh}(\Ad^*_{h^{-1}}m)\\
=\Ad^*_{h^{-1}}(\partial_t m)-\ad^*_{\de^rh}(\Ad^*_{h^{-1}}m)
+\th^*_{\de^rh}(\Ad^*_{h^{-1}}m)
+\th^*_{\Ad(h)u}(\Ad^*_{h^{-1}}m)\\
=\Ad^*_{h^{-1}}(\partial_t m+\th^*_um).
\end{gather*}
This shows the symmetry of the generalized right Euler-Poincar\'e equation under the group $C^\oo(I,H)$.
\end{proof}

This proposition ensures that if the two conditions on $\th^*$ 
are satisfied, then the generalized right Euler-Poincar\'e equation is an equation for curves in the homogeneous space. 

\begin{remark}\label{410}
The orbit invariant $\Th_\ga^*\frac{\de l}{\de u}$ from proposition \ref{genact} for the generalized right Euler-Poincar\'e equation on Lie groups is also an orbit invariant for the generalized right Euler-Poincar\'e equation on homogeneous spaces.
\end{remark}

\paragraph{Tensor densities.}
When $G=\Diff(S^1)$ and $H=S^1$, 
the action of the Lie algebra $\X(S^1)$ on $\la$--densities,
\ie $\th^*_um=um'+\la u'm$,
satisfies the two conditions required in proposition \ref{condi}.

The first condition on $\th^*$, its $H$--equivariance, can be verified infinitesimally since $S^1$ is connected:
\[
\th^*_{\ad_wu}m-\th^*_u\ad^*_wm=-\ad^*_w\th^*_um,\quad\forall u\in\g,m\in\g^*,w\in\h.
\] 
Both sides of the equality give $wu'm'+wum''+\la wu''m+\la wu'm'$, since $w'=0$.
The second condition on $\th^*$ is easily verified: $\th^*_wm=wm'=\ad^*_wm$ for all $w\in\h=\RR$.

This means we can replace the coadjoint action
$\ad^*_um=um'+2u'm$, which is the action on 2-densities on the circle,
with the action on $\la$-densities, to write down a 
generalized Euler-Poincar\'e equation on 
the homogeneous space $S^1\setminus\Diff(S^1)$:
\[
\partial_tm=-um'-\la u'm.
\]

The same thing can be done in higher dimensions too.
Let $G=\Diff(M)$ be the diffeomorphism group of a connected compact manifold,
and $H=\Diff_\mu(M)$ the subgroup of volume preserving diffeomorphisms, 
where $\mu$ is a fixed volume form on $M$. 
As in example \ref{ex:gepdiff}, 
we identify $\Om^1(M)$ with the regular dual of $\X( M)$ 
using the volume form, so the coadjoint action can be written as $\ad^*_u\al=L_u\al+(\div u)\al$, 
an action on 1-form densities. The action $\th^*$ on 1-form $(\la-1)$--densities: 
\begin{equation}\label{acla}
\th_u^*\al=L_u\al+(\la-1)(\div u)\al
\end{equation}
satisfies the conditions required in proposition \ref{condi}, 
hence it provides a generalized Euler-Poincar\'e equation on 
the homogeneous space $\Diff_\mu(M)\setminus\Diff(M)$:
$$
\partial_t\al=-L_u\al-(\la-1)(\div u)\al=0.
$$ 

The first condition on $\th^*$ can be verified as follows:
\begin{align*}
\th^*_{\Ad_{h^{-1}}u}\Ad^*_h\al
&=\th^*_{h^*u}(h^*\al)
=L_{h^*u}(h^*\al)+(\la-1)(\div h^*u)h^*\al\\
&=h^*(L_u\al+(\la-1)(\div u)\al)
=h^*(\th^*_u\al)
=\Ad^*_h(\th^*_u\al),
\end{align*}
using the fact that $J(h)=1$ and $\div(h^*u)=h^*\div u$ for all $h\in\Diff_\mu(M)$.
The second condition follows from $\th^*_w\al=L_w\al=\ad^*_w\al$
for all $w\in\X_\mu(M)$.

\begin{example}\label{ex:muB}
The periodic $\mu$Burgers equation 
\begin{equation}\label{mubu}
-\partial_t u''-3u'u''-uu'''=0
\end{equation}
is shown to admit a Lax pair formulation  and a bihamiltonian structure in \cite{LMT}. 
This terminology is related to a reformulation of this equation as 
$(\pa_tu+uu')'=0$, hence as $\pa_tu+uu'=-\mu(\pa_tu)$,
where $\mu$ denotes the mean of a function on the circle.

From proposition \ref{condi} follows that,
by replacing the coadjoint action by the action of $\X(S^1)$
on tensor densities on the circle, the family of equations
\[
\partial_tm=-um'-\la u'm,\quad m=-u''
\]
can be interpreted as generalized right Euler-Poincar\'e equations on the homogeneous space $S^1\setminus\Diff(S^1)$. The Lagrangian 
is given here by the $\dot H^1$ inner product:
\begin{equation}\label{lag}
l(u)=\frac12\int_{S^1}(u')^2dx. 
\end{equation}
For $\la=2$ one obtains a geodesic equation: the Hunter-Saxton equation from example \ref{ex:HS}. For $\la=3$ one obtains
a generalized Euler-Poincar\'e equation: the $\mu$Burgers equation \eqref{mubu}.

Applying remark \ref{410} to the 
$\mu$Burgers equation, we get the conserved quantity 
$$
\Th^*_\ga(m)=(m\o\ga)(\ga')^3,\quad m=-u'', 
$$
for any lift $\ga:I\to\Diff(S^1)$ of the solution curve $\bar\ga:I\to S^1\setminus\Diff(S^1)$ whose right logarithmic derivative is $\bar u=C^\oo(I,S^1)\cdot u$, \ie $\ga'\circ\ga^{-1}=u$.
\end{example}

\begin{example}\label{ex:multimuB}
There is a multidimensional $\mu$Burgers equation
which can be obtained as a generalized Euler-Poincar\'e equation.
On one hand, as observed for  
the multidimensional Hunter-Saxton equation
from example \ref{higherHS}, the Lagrangian  
\[
l:\X(M)\to\RR,\quad l(u)=\frac12\int_M(\div u)^2\mu
\]
is both $\X_\mu(M)$--invariant and $\Ad(\Diff_\mu(M))$--invariant.
On the other hand, we already showed that the action \eqref{acla} of the Lie algebra $\X(M)$ on 1-form $(\la-1)$-densities
is $\Diff_\mu(M)$--equivariant and its restriction to $\X_\mu(M)$ equals the restriction to $\X_\mu(M)$ of the coadjoint action.
Hence, by proposition \ref{condi}, we have a corresponding generalized Euler-Poincar\'e equation
on the homogeneous space  $\Diff_\mu(M)\setminus\Diff(M)$:
\begin{equation}\label{mdmb}
\pa_t d(\div u)=-d L_u (\div u)-(\la-1)(\div u) d(\div u).
\end{equation}

For $\la=3$, this generalized Euler-Poincar\'e equation 
is the multidimensional $\mu$Burgers equation. It can be rewritten as
\[
d\div(\pa_tu+(\div u)u)=0,
\]
because of the identity $\div((\div u)u)=L_u\div u+(\div u)^2$.
We deduce that the time dependent function $\div(\pa_tu+(\div u)u)$
is constant on $M$. Its integral over $M$ vanishes, so it is the zero function.
Thus we have a simpler expression for the 
multidimensional $\mu$Burgers equation \eqref{mdmb}:
\begin{equation}\label{scurt}
\div(\pa_tu+(\div u)u)=0,
\end{equation}
thus generalizing $(\pa_tu+u'u)'=0$, the $\mu$Burgers equation 
on the circle.

Applying remark \ref{410} to the multidimensional
$\mu$Burgers equation, we get a conserved quantity 
along \eqref{scurt}:
$$
\Th_\ga^*\al=J(\ga)^2\ga^*\al,\quad \al=-d(\div u),
$$
for any curve $\ga$ in $\Diff(S^1)$ with $\ga'\o\ga^{-1}=u$.
\end{example}


\section{Applications: Orbit invariants in global existence results}\label{s6}

The orbit invariants described in the previous sections are powerful tools in studying the nature of the solutions of generalized Euler-Poincar\'e equations. In this section we consider equations from mathematical physics that fall into this category and prove global existence and uniqueness of solutions to the associated periodic Cauchy problems using \eqref{vval} and proposition \ref{genact}.


The four integrable equations: CH, $\mu$CH, DP and $\mu$DP from examples \ref{ex:ch}, \ref{ex:much}, \ref{ex:DP} and \ref{ex:muDP} are special cases of the equation
\begin{equation}\label{class}
\partial_t m = -um'-\lambda u'm,\quad m=\Phi u,
\end{equation}
where the operator $\Phi$ on the space of smooth functions on the circle is either a linear differential operator of the form $\sum_{j=0}^r (-1)^j \partial_x^{2j}$ or the linear operator $\mu - \partial_x^2$, 
where $\mu(u)$ is the mean of the function $u$ on $S^1$.

The equation \eqref{class} is a generalized right Euler-Poincar\'{e} equations \ref{gen} on the group of diffeomorphisms of the circle for the reduced Lagrangian 
$$
l(u)=\frac12\int_{S^1} u\Phi u dx. 
$$
In this case the $\Diff(S^1)$ action $\Theta^*$ is the action on $\lambda$--densities on the circle, with associated infinitesimal action $\th^*_uf=uf'+\la u'f$. 
The coadjoint action is obtained for $\la=2$ and in this special case \eqref{class} 
is the geodesic equation on $\Diff(S^1)$ with respect to the right invariant metric defined by the $H^r$ inner product.  

\medskip

We consider the periodic Cauchy problem for \eqref{class}:
\begin{equation}\label{eq:phi}
\partial_t u+uu'=-\Phi^{-1}\left([u,\Phi]u'+\lambda u'\Phi u\right), \quad x\in S^1, t\in\RR^+
\end{equation}
\begin{equation}\label{data:phi}
u(0,x)=u_0(x)
\end{equation}
where  $\Phi: H^s \rightarrow H^{s-r}$,  $\Phi=\sum_{j=0}^r (-1)^j \partial_x^{2j}$ and $\lambda$ is an arbitrary real number. 
The main result of this section is the following global (in time) existence and uniqueness theorem.

\begin{theorem}\label{th:global}
Let $s>2r+\frac{1}{2}$. Assume that the initial data $u_0 \in H^s(S^1)$ satisfies 
\[
\Phi u_0 \geq 0.
\]
Then the Cauchy problem (\ref{eq:phi})-(\ref{data:phi}) has a unique global solution $u$ in \\
$C(\RR^+, H^s(S^1))\cap C^1(\RR^+, H^{s-1}(S^1))$.
\end{theorem}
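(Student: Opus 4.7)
The plan is to prove local well-posedness by a contraction argument formulated on the group of Sobolev diffeomorphisms, then exploit the orbit invariant of Proposition~\ref{genact} together with the sign assumption $\Phi u_0 \ge 0$ to preclude finite-time blow-up. For the local step, I would follow the Ebin--Marsden strategy used for Camassa--Holm and $\mu$DP: introduce the flow $\ga$ of $u$ by $\pa_t \ga = u \circ \ga$, $\ga(0)=\id$, and rewrite \eqref{eq:phi} as a first-order ODE on $T\Diff^s(S^1)$. In Lagrangian coordinates the quasilinear term $uu'$ is absorbed into the geometric structure, and the remaining nonlinearity $F(u)=-\Phi^{-1}([u,\Phi]u'+\lambda u'\Phi u)$, conjugated by $\ga$, is a smooth map $H^s \to H^s$ for $s>2r+1/2$: the commutator $[u,\Phi]$ has differential order $2r-1$, so after applying $\Phi^{-1}$ the net order is zero and the Sobolev-multiplication hypothesis is comfortably satisfied. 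Picard--Lindel\"of on Banach manifolds yields a unique solution $u\in C([0,T^*); H^s)\cap C^1([0,T^*); H^{s-1})$ on a maximal interval.

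Next I would turn on the orbit invariant. Specializing Proposition~\ref{genact} to the action $\Th^*$ of $\Diff(S^1)$ on $\la$-densities gives the pointwise identity
\[
\bigl(\ga'(t,x)\bigr)^{\la}\, m(t,\ga(t,x)) = m_0(x),\qquad m=\Phi u,
\]
on $[0,T^*)\times S^1$. Since $\ga(t,\cdot)$ is an orientation-preserving diffeomorphism, $\ga'>0$, and the sign hypothesis $m_0\ge0$ propagates to $m(t,\cdot)\ge0$ for all $t<T^*$. Integrating \eqref{class} over $S^1$ and using $\int u'\,\Phi u\,dx=0$ (because $\Phi$ is self-adjoint and of even order, so the quadratic form $u\mapsto\int u'\Phi u\,dx$ is skew-symmetric) together with $\int(um)'\,dx=0$ shows that $\int m\,dx$ is conserved; combined with $m\ge0$ this gives
\[
\|m(t,\cdot)\|_{L^1(S^1)}=\|m_0\|_{L^1(S^1)}.
\]
Writing $u=\Phi^{-1}m=G_\Phi\ast m$ with the periodic Green's function $G_\Phi\in W^{1,\infty}(S^1)$ of the elliptic operator $\Phi$ of order $2r\ge 2$, Young's inequality then yields the uniform-in-time bound $\|u(t)\|_{L^\infty}+\|u_x(t)\|_{L^\infty}\le C\|m_0\|_{L^1}$.

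Finally, to globalize, I would prove a Beale--Kato--Majda-type continuation criterion: a standard energy estimate on \eqref{eq:phi}, commuting $\Lambda^s=(1-\pa_x^2)^{s/2}$ through the transport term $uu'$ and the nonlocal remainder $F(u)$ via Kato--Ponce commutator bounds, produces
\[
\tfrac{d}{dt}\|u(t)\|_{H^s}^2 \le C\bigl(1+\|u_x(t)\|_{L^\infty}\bigr)\|u(t)\|_{H^s}^2.
\]
Gronwall combined with the uniform $W^{1,\infty}$ bound from the previous paragraph forces $\|u(t)\|_{H^s}$ to grow at most exponentially on $[0,T^*)$, so the blow-up alternative implies $T^*=\infty$. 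The main technical obstacle is precisely this BKM inequality: one must control the cubic-in-$u$ nonlocal term $\Phi^{-1}([u,\Phi]u'+\lambda u'\Phi u)$ at the $H^s$ level while retaining only $\|u_x\|_{L^\infty}$ on the right-hand side. For the family $\Phi=\sum_{j=0}^r(-1)^j\pa_x^{2j}$, the decisive point is that $\Phi$ is elliptic with constant coefficients, so $\Phi^{-1}$ is a pseudodifferential operator of order $-2r$ and the standard Kato--Ponce toolkit transplants from the Camassa--Holm setting with only cosmetic changes in the book-keeping of derivative orders.
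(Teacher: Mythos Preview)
Your proposal is correct and follows essentially the same three-step architecture as the paper: local well-posedness via the Ebin--Marsden ODE reformulation on $\mathcal{D}^s\times H^s$, a BKM-type persistence criterion $\tfrac{d}{dt}\|u\|_{H^s}^2\lesssim \|u\|_{C^1}\|u\|_{H^s}^2$, and then the orbit invariant $\Th^*_\ga m=(m\circ\ga)(\ga')^\la$ to propagate the sign $m\ge 0$ and conserve $\int m\,dx$, yielding the uniform $W^{1,\infty}$ bound on $u$. The only cosmetic differences are that the paper obtains the persistence estimate through Friedrichs mollifiers (citing Taylor) rather than Kato--Ponce, and that you spell out the Green's-function argument $u=G_\Phi*m$ with $G_\Phi\in W^{1,\infty}$ and the skew-symmetry of $\int u'\Phi u\,dx$ more explicitly than the paper, which simply asserts the Sobolev-type bound $\|u_x\|_{\infty}\lesssim\|\Phi u\|_{L^1}$ and the conservation of $\int\Phi u\,dx$.
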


We postpone the proof until the end of this section and proceed to establish local well-posedness (existence, uniqueness and continuous dependence on initial data of solutions for a short time) and persistence of solutions of the Cauchy problem (\ref{eq:phi})-(\ref{data:phi}).

\begin{theorem}\label{th:local}
Let $s>2r+\frac{1}{2}$. Then the periodic Cauchy problem (\ref{eq:phi})-(\ref{data:phi}) has a unique solution 
\[ u \in C([0,T), H^s(S^1))\cap C^1 ([0,T), H^{s-1}(S^1))
 \]
for some $T>0$ and the solution depends continuously on initial data. 
\end{theorem}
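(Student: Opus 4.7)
The plan is to reduce the Cauchy problem \eqref{eq:phi}-\eqref{data:phi} to an ordinary differential equation on the Banach manifold $T\mathcal{D}^s(S^1)$, where $\mathcal{D}^s(S^1)$ denotes the group of orientation-preserving $H^s$ diffeomorphisms of $S^1$, and then invoke the Banach contraction principle in the form of Picard--Lindel\"of. Since $s>2r+\tfrac12\ge \tfrac52>\tfrac32$, $\mathcal{D}^s(S^1)$ is a smooth Hilbert manifold and $T\mathcal{D}^s\cong \mathcal{D}^s\times H^s$ via the right-invariant framing.

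First I would pass to Lagrangian coordinates. For a candidate solution $u$, let $\ga(t,\cdot)\in\mathcal{D}^s$ be its flow, $\pa_t\ga=u\o\ga$, $\ga(0)=\id$, and set $v=u\o\ga\in H^s$. Differentiating $v$ in $t$ and using \eqref{eq:phi} yields the first-order system
\begin{equation*}
\pa_t\ga=v,\qquad \pa_t v=G(\ga,v),\qquad G(\ga,v):=-\Ph^{-1}\bigl([u,\Ph]u'+\la u'\Ph u\bigr)\o\ga,\ u:=v\o\gi.
\end{equation*}
The transport term $uu'$, which loses one derivative in Eulerian coordinates, has been exactly absorbed into $\pa_t v$ by the chain rule, so the whole problem reduces to showing that the vector field $(v,G(\ga,v))$ on $\mathcal{D}^s\times H^s$ is locally Lipschitz.

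The heart of the matter is the smoothness of $G$. To handle the non-local factor $\Ph^{-1}$, I would conjugate by right composition: set $\tilde\Ph_\ga:=R_\ga\o\Ph\o R_{\gi}$, so that
\[
G(\ga,v)=-\tilde\Ph_\ga^{-1}\bigl(N(\ga,v)\bigr),\qquad N(\ga,v):=\bigl([u,\Ph]u'+\la u'\Ph u\bigr)\o\ga.
\]
A direct chain-rule computation writes both $\tilde\Ph_\ga v$ and $N(\ga,v)$ as polynomial expressions in $v,v_x,\dots,v^{(2r)}$ whose coefficients are polynomials in $\ga_x^{\pm 1},\ga_{xx},\dots,\ga^{(2r)}$. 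The hypothesis $s>2r+\tfrac12$ gives $s-2r>\tfrac12$, so $H^{s-2r}(S^1)$ is a Banach algebra and each factor $\ga^{(k)}$, $v^{(j)}$ with $j,k\le 2r$ belongs to $H^{s-2r}$; this makes $N:\mathcal{D}^s\times H^s\to H^{s-2r}$ smooth. Using also that reciprocal is smooth on the open set $\{\ga_x>0\}$ of the Banach algebra $H^{s-1}$, $\tilde\Ph_\ga$ is a uniformly elliptic differential operator of order $2r$ depending smoothly on $\ga\in\mathcal{D}^s$, so by standard elliptic theory $\ga\mapsto\tilde\Ph_\ga^{-1}$ is smooth from $\mathcal{D}^s$ into $L(H^{s-2r},H^s)$. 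Composing, $G:\mathcal{D}^s\times H^s\to H^s$ is smooth, hence locally Lipschitz.

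With this in hand, Picard--Lindel\"of on $\mathcal{D}^s\times H^s$ produces a unique local solution $(\ga,v)\in C^1([0,T),\mathcal{D}^s\times H^s)$ with $(\ga(0),v(0))=(\id,u_0)$, and continuous dependence on initial data is automatic. Setting $u=v\o\gi$ yields a solution of \eqref{eq:phi}-\eqref{data:phi}: $u\in C([0,T),H^s)$ follows from continuity of right composition and of inversion on $\mathcal{D}^s$, while $u\in C^1([0,T),H^{s-1})$ follows directly from the equation, since the Eulerian right-hand side $F(u)=-uu'-\Ph^{-1}(\cdots)$ maps $H^s$ into $H^{s-1}$. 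The main obstacle is precisely the smoothness verification for $G$: the point of the Lagrangian change of variables is to trade the one-derivative loss in $F(u)$ for bookkeeping of chain-rule factors $\ga^{(k)}$, and this bookkeeping must be controlled uniformly on bounded sets of $\mathcal{D}^s$ using the algebra property of $H^{s-2r}$ and the ellipticity of $\tilde\Ph_\ga$.
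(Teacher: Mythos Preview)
Your proposal is correct and follows the same Ebin--Marsden strategy as the paper: pass to Lagrangian coordinates, rewrite the equation as a second-order ODE on $\mathcal{D}^s\times H^s$, verify that the resulting vector field is $C^1$, apply Picard--Lindel\"of, and translate back to Eulerian variables.

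The difference lies in how the regularity of the right-hand side is established. The paper computes the two directional derivatives $\partial_\ga F$ and $\partial_{\dot\ga}F$ explicitly, rearranges them into sums of commutator expressions such as $[X,\Phi_\ga^{-1}]$ and $[\dot\ga,\Phi_\ga]$, bounds each piece in $H^s$ using the composition lemma and the algebra property, and then checks continuity of the derivatives near $(\id,0)$ by further add-and-subtract estimates. Your route is more structural: you note that both $\tilde\Phi_\ga v$ and $N(\ga,v)$ are universal polynomials in $v,\dots,v^{(2r)}$ with coefficients that are polynomials in $\ga_x^{\pm1},\ga_{xx},\dots,\ga^{(2r)}$, all lying in the Banach algebra $H^{s-2r}$, so smoothness of $N$ and of $\ga\mapsto\tilde\Phi_\ga\in L(H^s,H^{s-2r})$ is automatic, and smoothness of $\tilde\Phi_\ga^{-1}$ then follows from smoothness of inversion on the open set of invertible operators. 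This bypasses the explicit commutator bookkeeping and actually yields $C^\infty$ rather than merely $C^1$; the paper's hands-on estimates, on the other hand, produce explicit bounds that are closer in spirit to what is needed later for persistence.
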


Our proof of this theorem uses an approach developed by Ebin and Marsden in \cite{EM} for Euler and Navier-Stokes equations.

Let $\ga(t)$ denote the flow of $u(t)$, i.e. 
$u(t,x)=\dot{\ga}(t, \ga^{-1}(t,x))$. For convenience we use the notation 
\begin{equation}
\Psi_{\ga}\xi:=\big(\Psi(\xi\circ\ga^{-1})\big)\circ\ga
\end{equation}
for a pseudodifferential operator $\Psi$.
We write the Cauchy problem (\ref{eq:phi})-(\ref{data:phi}) as an initial value problem for an ODE in the form
\begin{equation}\label{eq:mainode}
\ddot{\ga}=-\Phi^{-1}_{\ga}\left( [\dot{\ga},\Phi_{\ga}](\partial_x)_{\ga}\dot{\ga}+\lambda \dot{\ga}'\Phi_{\ga}\dot{\ga} \right),  \quad
\dot{\ga}'(0,x)=u_0(x), \quad \ga(0,x)=x
\end{equation}
and the local well-posedness of (\ref{eq:phi})-(\ref{data:phi}) follows from Picard iterations if
\begin{equation}
F(\ga,\dot{\ga})=-\Phi^{-1}_{\ga} \left( [\dot{\ga}, \Phi_{\ga}](\partial_x)_{\ga}\dot{\ga}+\lambda \dot{\ga}' \Phi_{\ga}\dot{\ga} \right)
\end{equation}
is a continuously differentiable map from $\mathcal{D}^s \times H^s(S^1)$ into $H^s(S^1)$. Here $\mathcal{D}^s$ denotes orientation preserving circle diffeomorphisms of class $H^s$.

\begin{proof}[Proof of Theorem \ref{th:local}]
We use the symbol $\lesssim$ to denote $\leq C_{\ga}$ where $C_{\ga}$ is a constant that depends on $\lambda$ and the $H^s$ norms of $\ga$ and $\ga^{-1}$. Three important results about Sobolev spaces are used in the following estimates repeatedly: The algebra property of $H^s$ for $s>1/2$, the Sobolev imbedding theorem $C^1\hookrightarrow H^s$ for $s>3/2$ and the composition lemma (see the Appendix in \cite{BB} and lemma 4.1 in \cite{Mi}).

Our first estimate
\[ \| F(\ga, \gd)\|_{H^s}\lesssim \| [\gd\circ\gi, \Phi]\partial_x(\gd\circ\gi) \|_{H^{s-2r}}+ \| \partial_x(\gd\circ\gi) . \Phi(\gd\circ\gi)\|_{H^{s-2r}}
\]
\[ \lesssim \| \gd \circ\gi \|_{H^s}^2+\| \gd\circ\gi\|_{H^{s-2r+1}} \| \gd\circ\gi\|_{H^s}
\]
\[ \lesssim \| \gd \|_{H^s}^2
\] 
follows from composition lemma and algebra property of Sobolev spaces
and establishes that $F$ is a bounded map from ${\cal D}^s \times H^s$ into $H^s$.

The directional derivatives of $F$ are given by the formulas
\begin{eqnarray}
& \partial_{\gd}F_{(\ga,\gd)}(X)=&-\pig \big( [X,\pg]\dxg \gd + [\gd, \pg]\dxg X \big) \label{eq:Fgd1} \\
& & - \lambda \pig \big((\pg \gd)\dxg X  + (\pg X)  \dxg \gd\big) \label{eq:Fgd2}
\end{eqnarray}
and
\begin{eqnarray}
& \partial_{\ga}F_{(\ga,\gd)}(X)=&-\pig \big( [\pg,X\dxg\gd]\dxg\gd +  [\gd,\pg]\dxg(X\dxg\gd) \big)\label{eq:Fga1}\\
&&+\lambda\pig \big( (\pg\gd)X\dxg^2\gd + (\dxg\gd) \pg(X\dxg\gd)\big)\label{eq:Fga2} \\
&&-X\pig\dxg \big( [\gd,\pg]\dxg\gd + \lambda (\pg\gd)\dxg \gd \big).\label{eq:Fga3}
\end{eqnarray}
The $H^s$ norm of the term on the right hand side of \eqref{eq:Fgd1} is bounded by
\[ \lesssim \|  [X,\pg]\dxg \gd\|_{H^{s-2r}} +\|   [\gd, \pg]\dxg X\|_{H^{s-2r}}
\]
\[ \lesssim  \|X \|_{H^s} \| \gd\|_{H^s} 
\]
Similarly the term in \eqref{eq:Fgd2} is estimated by
\[ \lesssim \| \gd\|_{H^s} \| X\|_{H^{s-2r+1}}+\| X\|_{H^s} \| \gd\|_{H^{s-2r+1}}
\]
\[ \lesssim \| \gd\|_{H^s} \| X\|_{H^s}.
\]
Therefore we have
\begin{equation} \label{eq:est2}
\| \partial_{\gd}F_{(\ga,\gd)}(X)\|_{H^s} \lesssim \|  \gd\|_{H^s} \|X\|_{H^s}.
\end{equation}
In order to estimate the $H^s$ norm of $\partial_{\ga}F_{(\ga,\gd)}(X)$ we rearrange the terms in \eqref{eq:Fga1}-\eqref{eq:Fga3}:
\begin{eqnarray}
& \partial_{\ga}F_{(\ga,\gd)}(X)=&[X,\pig]\big( (\pg\gd)\dxg^2 \gd\big) \label{eq:Fga21}\\
&&+\pig [\gd,\pg]\big( (\dxg X)\dxg\gd \big)\label{eq:Fga22} \\
&&-[X,\pig[\gd,\pg]]\dxg^2\gd \label{eq:Fga23} \\
&&-(\lambda+1) [X,\pig]\dxg\big( (\pg\gd)\dxg \gd \big) \label{eq:Fga24} \\
&&+\lambda \pig\big( (\dxg\gd)[\pg,X]\dxg\gd\big) \label{eq:Fga25}.
\end{eqnarray}
For the term in \eqref{eq:Fga21} we have
\[
\| [X,\pig]\big( (\pg\gd)\dxg^2 \gd\big)\|_{H^s} \leq \| X\|_{H^s} \| (\pg \gd)\dxg^2 \gd\|_{H^{s-2r}}
\]
\[
\lesssim \| X\|_{H^s} \| \gd\|_{H^s}^2.
\]
The estimate for \eqref{eq:Fga22} is
\[
\| \pig [\gd,\pg]\big( (\dxg X)\dxg\gd \big)\|_{H^s} \lesssim \|[\gd,\pg]\big( (\dxg X)\dxg\gd \big)\|_{H^{s-2r}}
\]
\[
\lesssim \| \gd\|_{H^s} \| (\dxg X)\dxg \gd\|_{H^{s-1}}
\]
\[
\lesssim \|\gd \|_{H^s}^2 \|X \|_{H^s}.
\]
The nested commutators in \eqref{eq:Fga23} helps us bound its $H^s$ norm:
\[
\| [X,\pig[\gd,\pg]]\dxg^2\gd\|_{H^s} \lesssim \| X\|_{H^s} \| \gd\|_{H^s}\|\dxg^2 \gd \|_{H^{s-2}}
\]
\[
\lesssim \| X\|_{H^s}\| \gd\|_{H^s}^2.
\]
The $H^s$ norms of the last two terms \eqref{eq:Fga24} and \eqref{eq:Fga25} are estimated similarly to give 
\begin{equation}\label{eq:est3}
\| \partial_{\ga}F_{(\ga,\gd)}(X)\|_{H^s} \lesssim \|  \gd\|_{H^s}^2 \|X\|_{H^s}.
\end{equation}
Therefore both $\partial_{\gd}F_{(\ga,\gd)}$ and $\partial_{\ga}F_{(\ga,\gd)}$ are bounded linear operators on the space of $H^s$ functions.

In order to complete the proof of the theorem, it is sufficient to establish that $\partial_{\gd}F_{(\ga,\gd)}$ and $\partial_{\ga}F_{(\ga,\gd)}$ depend continuously on $(\ga,\gd)$ in some neighbourhood of $(\mbox{id},0)$ in ${\cal D}^s\times H^s$. 
Note that $\| \partial_{\ga}F_{(\ga,\gd)}(X)-\partial_{\ga}F_{(\mbox{id},\gd)}(X)\|_{H^s}$ is a sum of differences corresponding to each term in \eqref{eq:Fga21}-\eqref{eq:Fga25}. For instance the difference corresponding to \eqref{eq:Fga21} is
\begin{equation}\label{eq:sample}
[X,\pig] \big( (\pg\gd)\dxg^2 \gd\big)-[X,\Phi^{-1}]\big( (\Phi\gd)\dx^2 \gd\big).
\end{equation}
We add and subtract appropriate terms to bound the $H^s$ norm of \eqref{eq:sample}:
\begin{equation}
 \| [X,\pig] \big( (\pg\gd)\dxg^2 \gd\big)-[X,\Phi^{-1}]\big( (\Phi\gd)\dx^2 \gd\big)\|_{H^s}
\end{equation}
\begin{equation}\label{eq:sam1}
\leq \| [X,\pig] \big( (\pg\gd)\dxg^2 \gd\big)-[X,\pig] \big( (\pg\gd)\dxg^2 \gd\big)\circ\gi\|_{H^s}
\end{equation}
\begin{equation}\label{eq:sam2}
 +\|[X\circ\gi,\Phi^{-1}]\big(\Phi(\gd\circ\gi).\dx^2(\gd\circ\gi) -(\Phi\gd)\dx^2 \gd\big) \|_{H^s}
\end{equation}
\begin{equation}\label{eq:sam3}
 +\|[X\circ\gi-X,\Phi^{-1}]\big( (\Phi\gd)\dx^2 \gd\big) \|_{H^s}.
\end{equation}
Here we use another property of composition of $H^s$ functions with $H^s$ class diffeomorphisms, lemma 4.2 in \cite{Mi} (see \cite{BB} for the proof in the case of $s$ integer), to estimate all three terms \eqref{eq:sam1}-\eqref{eq:sam3}. For \eqref{eq:sam1} we have
\begin{equation}
\| [X,\pig] \big( (\pg\gd)\dxg^2 \gd\big)-[X,\pig] \big( (\pg\gd)\dxg^2 \gd\big)\circ\gi\|_{H^s}
\end{equation}
\begin{equation}
\lesssim \| [X\circ\gi,\Phi^{-1}]\big(\Phi(\gd\circ\gi)\|_C^2 \|\ga -\mbox{id}\|_{H^s}
\end{equation}
\begin{equation}
\lesssim \|X\|_{H^s} \|\gd\|_{H^s}\|\gd\|_{H^{s-2r+2}} \|\ga -\mbox{id}\|_{H^s}.
\end{equation}
The term in \eqref{eq:sam2} is bounded by
\[
\|X\|_{H^s} \| \Phi(\gd\circ\gi).\dx^2(\gd\circ\gi) -(\Phi\gd)\dx^2 \gd\|_{H^{s-2r}}
\]
\[
\lesssim \|X\|_{H^s} \| \Phi(\gd\circ\gi)\big(\dx^2(\gd\circ\gi) -\dx^2 \gd\big)\|_{H^{s-2r}}
\]
\[
+\|X\|_{H^s} \| \big(\Phi(\gd\circ\gi)-\Phi\gd\big)\dx^2 \gd\|_{H^{s-2r}}
\]
\[
\lesssim \|X\|_{H^s} \| \gd\|_{H^s}^2 \|\ga-\mbox{id}\|_{H^s}.
\]
The estimate on \eqref{eq:sam3} is given by
\begin{equation}
\|[X\circ\gi-X,\Phi^{-1}]\big( (\Phi\gd)\dx^2 \gd\big) \|_{H^s}
\end{equation}
\begin{equation}
\lesssim \|X\circ\gi-X\|_{H^s} \|\gd\|_{H^s} \|\gd \|_{H^{s-2r+2}}
\end{equation}
\begin{equation}
\lesssim \|X\|_{H^s} \|\ga -\mbox{id}\|_{H^s} \|\gd \|_{H^s}^2.
\end{equation}
For all the difference terms corresponding to \eqref{eq:Fga22}-\eqref{eq:Fga25} are bounded similarly
by
\(
\|X\|_{H^s} \|\ga -\mbox{id}\|_{H^s} \|\gd \|_{H^s}^2
\), 
hence we have
\begin{equation}\label{eq:contga}
\| \partial_{\ga}F_{(\ga,\gd)}(X)-\partial_{\ga}F_{(\mbox{id},\gd)}(X)\|_{H^s} \lesssim \|X\|_{H^s} \|\ga -\mbox{id}\|_{H^s} \|\gd \|_{H^s}^2.
\end{equation}
Furthermore, for $\partial_{\gd}F_{(\ga,\gd)}$, the estimate
\begin{equation}\label{eq:contgd}
\| \partial_{\gd}F_{(\ga,\gd)}(X)-\partial_{\ga}F_{(\mbox{id},\gd)}(X)\|_{H^s} \lesssim \|X\|_{H^s} \|\ga -\mbox{id}\|_{H^s} \|\gd \|_{H^s}^2
\end{equation}
follows using the same techniques.

The inequalities \eqref{eq:contga} and \eqref{eq:contgd} with the observation that both directional derivatives $\partial_{\ga}F_{(\ga,\gd)}$ and $\partial_{\gd}F_{(\ga,\gd)}$ are linear in $\gd$ imply that $(\gd, F(\ga,\gd))$ defines a continuously differentiable vector field in a neighbourhood of $(\mbox{id}, 0)$ in the space ${\cal D}^s\times H^s $. Therefore the classical Picard iterations apply to the Cauchy problem \eqref{eq:mainode}.
\end{proof}

For $\lambda=2$, equation (\ref{eq:phi}) is an equation for geodesics for the right invariant metric induced by the $H^r$ inner product. The local well-posedness is proved in \cite{CKKT} in this case.

Another case of interest is when $\Phi=\mu-\partial_x^2$ where $\mu(u)=\int_{S^1}u(x)dx$. Both local well-posedness and global existence results of theorems \ref{th:local} and \ref{th:global} are shown in \cite{LMT} for $\lambda>0$ in this case.

The following proposition is a persistence result for Sobolev class solutions of (\ref{eq:phi})-(\ref{data:phi}); i.e. it provides a condition under which the short time solutions persist for all time. It is in the spirit of the persistence result of Beale, Kato and Majda in \cite{BKM} for Euler equations of hydrodynamics.

\begin{proposition}\label{prop:per}
Let $s>2r+\frac{1}{2}$ and let $u\in C([0,T),H^s(S^1))$ be a solution of (\ref{eq:phi})-(\ref{data:phi}). If there exists a $K>0$ such that
\[
\| u(t) \|_{C^1}\leq K<\infty
\]
for all $t$ then $u$ can be extended to a solution of (\ref{eq:phi})-(\ref{data:phi}) that exists for all time.
\end{proposition}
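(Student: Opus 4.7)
The approach is a standard Beale--Kato--Majda style continuation argument: derive an a priori $H^s$ energy estimate of the form
\[
\frac{d}{dt}\|u(t)\|_{H^s}^2 \leq C\bigl(1+\|u(t)\|_{C^1}\bigr)\|u(t)\|_{H^s}^2,
\]
then combine it with the assumed uniform $C^1$ bound via Gronwall's inequality and the local well-posedness result in Theorem \ref{th:local}. First I would work with the transport form $\partial_t u + uu' = -\Phi^{-1}\bigl([u,\Phi]u' + \lambda u'\Phi u\bigr)$, apply $\La^s=(1-\partial_x^2)^{s/2}$, and pair with $\La^s u$ in $L^2$. The transport term on the left produces $-\langle \La^s(uu'),\La^s u\rangle$; splitting out the commutator and integrating by parts in the remainder $\langle u\La^s u',\La^s u\rangle = -\tfrac12\int u'(\La^s u)^2\,dx$ reduces everything to the Kato--Ponce commutator $[\La^s,u]u'$, whose $L^2$ norm is controlled by $\|u'\|_{L^\infty}\|u\|_{H^s}$.

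For the right-hand side I would estimate $\|\Phi^{-1}([u,\Phi]u')\|_{H^s}$ and $\|\Phi^{-1}(u'\Phi u)\|_{H^s}$ using the facts that $\Phi^{-1}$ is a pseudodifferential operator of order $-2r$, that $[u,\Phi]$ is a differential operator of order at most $2r-1$ whose coefficients are polynomials in $u',\dots,u^{(2r)}$, and that $H^\sigma$ is a Banach algebra for $\sigma>1/2$. Tame Moser-type estimates then dominate each term by $C\|u\|_{C^1}\|u\|_{H^s}^2$: after distributing the derivatives through the commutator and using $\Phi^{-1}$ to absorb the top-order ones, at most one derivative falls on a factor controlled only in $L^\infty$. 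Adding these contributions gives the desired energy inequality.

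Under the hypothesis $\|u(t)\|_{C^1}\leq K$ on $[0,T)$, Gronwall then yields $\|u(t)\|_{H^s}\leq \|u_0\|_{H^s}\exp\bigl(C(1+K)t\bigr)$, which remains finite as $t\to T^-$. Choose $\ep>0$ small and apply Theorem \ref{th:local} with initial data $u(T-\ep)$; since its $H^s$ norm is bounded uniformly in $\ep$, the local existence time furnished by that theorem is bounded below by a constant independent of $\ep$, so the solution extends strictly past $T$. Iterating this step extends $u$ to all of $\RR^+$.

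The main obstacle is the careful derivative bookkeeping in $\Phi^{-1}([u,\Phi]u')$: a priori $[u,\Phi]u'$ contains terms with up to $2r$ derivatives of $u$, and one must exploit the special form $\Phi=\sum_{j=0}^r(-1)^j\partial_x^{2j}$ to confirm that after the inevitable cancellations in the commutator, no more than one derivative ever lands on an $L^\infty$-controlled factor, so that the bound involves $\|u\|_{C^1}$ and not some higher $C^k$ norm. Everything else is routine Sobolev calculus.
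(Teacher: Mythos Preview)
Your approach is correct and essentially the same as the paper's, which likewise derives an $H^s$ energy inequality (using Friedrichs mollifiers to justify the pairing) and closes with Gronwall and the local theory. The obstacle you flag at the end is real, but its resolution is the Moser/Gagliardo--Nirenberg tame product estimate you already invoke, not literal cancellation in the commutator: since $[u,\Phi]$ has order $2r-1$, every bilinear term in $[u,\Phi]u'$ carries at most $2r+1$ total derivatives distributed over two copies of $u$, and interpolation then yields the $\|u'\|_{L^\infty}\|u\|_{H^s}$ bound without any further algebraic structure of $\Phi$.
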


\begin{proof}
Using Friedrich's mollifiers $J_{\ep}$ we have
\begin{eqnarray}
&\frac{d}{dt}\| J_{\ep}u\|_{H^s} & = \langle 2\Lambda^s \partial_t J_{\ep} u, \Lambda^s  J_{\ep} u \rangle_{L^2} \nonumber \\
& & = -2 \langle \Lambda^s  J_{\ep} (uu'), \Lambda^s  J_{\ep} u \rangle_{L^2} \label{eq:mol1}\\
& & \ \ \ -2 \langle \Lambda^s \Phi^{-1} ([\Phi, J_{\ep}u]J_{\ep}u' , \Lambda^s  J_{\ep} u \rangle_{L^2} \label{eq:mol2}\\
& & \ \ \ -2\lambda \langle \Lambda^s  \Phi^{-1}(J_{\ep}u' \Phi J_{\ep}u), \Lambda^s  J_{\ep} u \rangle_{L^2} \label{eq:mol3} 
\end{eqnarray}
The mollifiers are used in estimating the term (\ref{eq:mol1})
\begin{equation} \label{eq:mol11}
\langle \Lambda^s J_{\ep}(uu'), \Lambda^s J_{\ep}u\rangle_{L^2} \lesssim \|u\|_{C^1} \|u\|_{H^s}^2
\end{equation}
This is proved in \cite{Taylor}. 
We use Cauchy-Schwarz to bound the second term (\ref{eq:mol2}) by
\[
 \| [J_{\ep}u, L ]J_{\ep}u'\|_{H^{s-2r}} \| J_{\ep}u\|_{H^s}
\]
By simply observing that the highest derivative on $u$ in the commutater $[J_{\ep}u, L]$ is $\partial_x^{2r-1}$ we obtain the estimate
\begin{equation}\label{eq:mol22}
\langle \Lambda^s \Phi^{-1} ([\Phi, J_{\ep}u]J_{\ep}u' , \Lambda^s  J_{\ep} u \rangle_{L^2} \lesssim \| u\|_{H^{s-1}} \| u\|_{H^s}^2
\end{equation}
On the third term (\ref{eq:mol3}) we use once again Cauchy-Schwarz to obtain
\[
\langle \Lambda^s  \Phi^{-1}(J_{\ep}u' \Phi J_{\ep}u), \Lambda^s  J_{\ep} u \rangle_{L^2} \lesssim \| J_{\ep}u' \Phi J_{\ep} u\|_{H^{s-2r}} \|J_{\ep} u \|_{H^s}
\]
By our assumption on the indices that $s-2r>1/2$ and $r\geq1$ we have
\begin{equation}\label{eq:mol33}
 \lambda \langle \Lambda^s  \Phi^{-1}(J_{\ep}u' \Phi J_{\ep}u), \Lambda^s  J_{\ep} u \rangle_{L^2} \lesssim \| u \|_{H^{s-1}} \| u \|_{H^s}^2
\end{equation}
Putting all three estimates (\ref{eq:mol11}), (\ref{eq:mol22}) and (\ref{eq:mol33}) together we obtain
\begin{equation}
 \frac{d}{dt} \| J_{\ep}u\|_{H^s}^2 \lesssim \|u\|_{C^1} \|u\|_{H^s}^2
\end{equation}
Passing to the limit as $\ep \rightarrow 0$ and using Gronwall's inequality we obtain the persistence result of proposition \ref{prop:per}.
\end{proof}

Now we have all the ingredients for the proof of the global existence and uniqueness result.

\begin{proof}[Proof of Theorem \ref{th:global}]
 The Sobolev embedding theorem  implies
\begin{equation}
\|\partial_x u \|_{\infty} \lesssim \|\Phi u\|_{L^1}.
\end{equation}
The orbit invariant mentioned in proposition \ref{genact}  guarantees that $\Phi u_0 \geq 0$ implies $\Phi u\geq 0$. Furthermore the integral $\int_{S^1} \Phi u dx$ is conserved, hence we have
\begin{equation}
\| \Phi u \|_{L^1}=\int_{S^1}\Phi u dx = \int_{S^1}\Phi u_0 dx.
\end{equation}
Therefore, by proposition \ref{prop:per}, the solution of theorem \ref{th:local} persists for all time.
\end{proof}



Section de Math\'ematiques, {\'E}cole Polytechnique F{\'e}d{\'e}rale de Lausanne, CH--1015 Lausanne, Switzerland, {\it \small e-mail: feride.tiglay@epfl.ch}

Department of Mathematics, West University of Timi\c soara,
Bd.~V.~P\^arvan 4, 300223-Timi\c soara, Romania,
{\it \small e-mail: vizman@math.uvt.ro}

\end{document}